\newtheorem{theorem}{Theorem}
\newtheorem{proposition}[theorem]{Proposition}
\newtheorem{remark}[theorem]{Remark}
\numberwithin{equation}{section} \numberwithin{theorem}{section}
\begin{document}

\title{Pattern Recognition on Oriented Matroids: Symmetric Cycles in the Hypercube Graphs. III}

\author{Andrey O. Matveev}
\email{andrey.o.matveev@gmail.com}

\begin{abstract}
We present statistics on the decompositions (with respect to a distinguished symmetric $2t$-cycle) of vertices of the hypercube graph,
whose negative parts are regarded as disjoint unions of two subsets of the ground set~$\{1,\ldots,t\}$ of the corresponding oriented matroid.
\end{abstract}

\maketitle

\pagestyle{myheadings}

\markboth{PATTERN RECOGNITION ON ORIENTED MATROIDS}{A.O.~MATVEEV}

\thispagestyle{empty}



\section{Introduction}

We continue to study the decompositions of vertices $T\in\{1,-1\}^t$, $t\geq 3$, of the {\em hypercube graph\/} $\boldsymbol{H}(t,2)$ with respect to its distinguished symmetric $2t$-cycle.

Recall that vertices $T':=(T'(1),\ldots,T'(t))$ and $T'':=(T''(1),\ldots,T''(t))$ are adjacent in the graph~$\boldsymbol{H}(t,2)$ if and only if $|\{e\in E_t\colon T''(e)=-T'(e)\}|=1$, where $E_t:=[t]:=\{1,\ldots,t\}$ is the {\em ground set\/} of the corresponding {\em oriented matroid\/} $\mathcal{H}:=(E_t,\{1,-1\}^t)$, with the set of {\em topes\/}~$\{1,-1\}^t$, which is realizable as the {\em arrangement\/} of {\em coordinate hyperplanes\/} in $\mathbb{R}^t$, see, e.g.,~\cite[Example~4.1.4]{BLSWZ}.

We denote by $\mathrm{T}^{(+)}:=(1,\ldots,1)$ the {\em positive tope\/} of the oriented matroid~$\mathcal{H}$; the {\em negative tope\/} $-\mathrm{T}^{(+)}$ is denoted by~$\mathrm{T}^{(-)}$. Given a subset~$A\subseteq E_t$, we let ${}_{-A}\mathrm{T}^{(+)}$ denote the tope $T$ whose {\em negative part\/} $T^-:=\{e\in E_t\colon T(e)=-1\}$ is the set~$A$, that is,
\begin{equation*}
({}_{-A}\mathrm{T}^{(+)})(e):=
\begin{cases}
-1\; , & \text{if $e\in A$}\; ,\\
\phantom{-}1\; , & \text{if $e\not\in A$}\; ;
\end{cases}
\end{equation*}
if $s\in E_t$, then we write ${}_{-s}\mathrm{T}^{(+)}$ instead of ${}_{-\{s\}}\mathrm{T}^{(+)}$.

As earlier in~\cite{M-SC-II}, throughout this note we will be dealing exclusively with one distinguished {\em symmetric $2t$-cycle\/} $\boldsymbol{R}:=(R^0,R^1,\ldots,R^{2t-1},R^0)$ in the
graph~$\boldsymbol{H}(t,2)$, whose vertex sequence is as follows:
\begin{equation}
\label{eq:12}
\begin{split}
R^0:\!&=\mathrm{T}^{(+)}\; ,\\
R^s:\!&={}_{-[s]}R^0\; ,\ \ \ 1\leq s\leq t-1\; ,
\end{split}
\end{equation}
and
\begin{equation}
\label{eq:13}
R^{k+t}:=-R^k\; ,\ \ \ 0\leq k\leq t-1\; .
\end{equation}
The subsequence of vertices $(R^0,\ldots,R^{t-1})$ is a basis of the space~$\mathbb{R}^t$; for any vertex $T$ of the graph~$\boldsymbol{H}(t,2)$, there exists a unique vector~$\boldsymbol{x}:=\boldsymbol{x}(T):=\boldsymbol{x}(T,\boldsymbol{R}):=(x_1,\ldots,x_t)\in\{-1,0,1\}^t$ such that
\begin{equation}
\label{eq:14}
T=\sum_{i\in [t]}x_i\cdot R^{i-1}\; .
\end{equation}
Thus, the set
\begin{equation*}
\boldsymbol{Q}(T,\boldsymbol{R}):=\{x_i\cdot R^{i-1}\colon x_i\neq 0\}
\end{equation*}
is the unique inclusion-minimal subset of
vertices
of the cycle~$\boldsymbol{R}$ such that
\begin{equation*}
\sum_{Q\in\boldsymbol{Q}(T,\boldsymbol{R})}Q=T\; ,
\end{equation*}
see~\cite[Section~11.1]{AM-PROM-I},\cite[\S{}1]{M-SC-II}.
In particular, we have~$|\boldsymbol{Q}(T,\boldsymbol{R})|=\|\boldsymbol{x}\|^2=:\langle\boldsymbol{x},\boldsymbol{x}\rangle$, where
$\langle\cdot,\cdot\rangle$ is the standard scalar product on~$\mathbb{R}^t$.

In Theorem~\ref{th:3} we obtain statistics on partitions $T^-=A\dot\cup B$ of the negative parts of vertices $T$ of the hypercube graph $\boldsymbol{H}(t,2)$ into two subsets and on the decompositions of vertices, $\boldsymbol{Q}({}_{-A}\mathrm{T}^{(+)},\boldsymbol{R})$, $\boldsymbol{Q}({}_{-B}\mathrm{T}^{(+)},\boldsymbol{R})$ and~$\boldsymbol{Q}(T,\boldsymbol{R})$,  with respect to the symmetric cycle~$\boldsymbol{R}$.

\section{Partitions of the negative parts of topes into two subsets, and the decompositions of topes}

Given two vertices $T'$ and $T''$ of the graph~$\boldsymbol{H}(t,2)$, we denote by $\mathbf{S}(T',T'')$
their {\em separation set}, that is, $\mathbf{S}(T',T''):=\{e\in E_t\colon T'(e)=-T''(e)\}$.
Note that if $x_e(T')\neq 0$ and $x_e(T'')\neq 0$, for some $e\in E_t$, then
\begin{align*}
e\not\in\mathbf{S}(T',T'')\ \ \ &\Longrightarrow\ \ \ x_e(T'')=\phantom{-}x_e(T')\; ;\\
e\in\mathbf{S}(T',T'')\ \ \ &\Longrightarrow\ \ \ x_e(T'')=-x_e(T')\; ,
\end{align*}
see~\cite[\S{}2]{M-SC-II}.

For elements~$s\in E_t$, we denote by $\boldsymbol{\sigma}(s):=(0,\ldots,\underset{\overset{\uparrow}{s}}{1},\ldots,0)$ the vectors of the standard basis of $\mathbb{R}^t$, and we define vectors~$\boldsymbol{y}(s):=\boldsymbol{y}(s;t)$ by
\begin{equation*}
\boldsymbol{y}(s):=\boldsymbol{x}({}_{-s}\mathrm{T}^{(+)})\; .
\end{equation*}

\begin{remark}
\label{th:1}
It follows from~{\rm\cite[Rem.~2.2]{M-SC-II}} that if $A$ and $B$ are {\em disjoint\/}
subsets of the ground set $E_t$, then
\begin{equation*}
\begin{split}
\boldsymbol{x}({}_{-(A\dot\cup B)}\mathrm{T}^{(+)})&=(1-|A|-|B|)\cdot\boldsymbol{\sigma}(1)+\sum_{s\in A}\boldsymbol{y}(s)
+\sum_{s\in B}\boldsymbol{y}(s)\\
&=\boldsymbol{x}({}_{-A}\mathrm{T}^{(+)})-|B|\cdot\boldsymbol{\sigma}(1)+\sum_{s\in B}\boldsymbol{y}(s)\\&=
-|A|\cdot\boldsymbol{\sigma}(1)+\sum_{s\in A}\boldsymbol{y}(s)+\boldsymbol{x}({}_{-B}\mathrm{T}^{(+)})\; ,
\end{split}
\end{equation*}
that is,
\begin{equation*}
\boldsymbol{x}({}_{-(A\dot\cup B)}\mathrm{T}^{(+)})=
-\boldsymbol{\sigma}(1)+\boldsymbol{x}({}_{-A}\mathrm{T}^{(+)})+\boldsymbol{x}({}_{-B}\mathrm{T}^{(+)})\; .
\end{equation*}
\end{remark}

As a consequence, we have
\begin{multline*}
\|\boldsymbol{x}({}_{-(A\dot\cup B)}\mathrm{T}^{(+)})\|^2=
\|\boldsymbol{x}({}_{-A}\mathrm{T}^{(+)})+\boldsymbol{x}({}_{-B}\mathrm{T}^{(+)})\|^2\\
-2\bigl(x_1({}_{-A}\mathrm{T}^{(+)})+x_1({}_{-B}\mathrm{T}^{(+)})\bigr)+1\\=
\|\boldsymbol{x}({}_{-A}\mathrm{T}^{(+)})\|^2
+\|\boldsymbol{x}({}_{-B}\mathrm{T}^{(+)})\|^2+2\langle\boldsymbol{x}({}_{-A}\mathrm{T}^{(+)}),\boldsymbol{x}({}_{-B}\mathrm{T}^{(+)})\rangle\\
-2x_1({}_{-A}\mathrm{T}^{(+)})-2x_1({}_{-B}\mathrm{T}^{(+)})+1\; .
\end{multline*}

\section{Inclusion of the negative parts of topes, and the decompositions of topes}

Let us consider the vectors~$\boldsymbol{x}(T')$ and~$\boldsymbol{x}(T'')$ associated with vertices $T',T''\in\{1,-1\}^t$ of $\boldsymbol{H}(t,2)$, whose negative parts are comparable by inclusion.

\begin{proposition} Let $A$ and $C$ be two sets such that $A\subseteq C\subseteq E_t$.
\begin{itemize}
\item[\rm(i)]
If
\begin{equation*}
\{1,t\}\cap A=\{1,t\}\cap C\; ,
\end{equation*}
then
\begin{equation*}
\boldsymbol{x}({}_{-C}\mathrm{T}^{(+)})=\boldsymbol{x}({}_{-A}\mathrm{T}^{(+)})-\sum_{i\in C-A}(\;\boldsymbol{\sigma}(i)-\boldsymbol{\sigma}(i+1)\;)\; .
\end{equation*}

\item[\rm(ii)]
If
\begin{equation*}
\{1,t\}\cap A=\{1\}\ \ \ \text{and}\ \ \ \{1,t\}\cap C=\{1,t\}\; ,
\end{equation*}
or
\begin{equation*}
|\{1,t\}\cap A|=0\ \ \ \text{and}\ \ \ \{1,t\}\cap C=\{t\}\; ,
\end{equation*}
then
\begin{equation*}
\boldsymbol{x}({}_{-C}\mathrm{T}^{(+)})=\boldsymbol{x}({}_{-A}\mathrm{T}^{(+)})-\boldsymbol{\sigma}(1)-\boldsymbol{\sigma}(t)-\sum_{i\in C-(A\dot\cup\{t\})}(\;\boldsymbol{\sigma}(i)-\boldsymbol{\sigma}(i+1)\;)\; .
\end{equation*}

\item[\rm(iii)]
If
\begin{equation*}
|\{1,t\}\cap A|=0\ \ \ \text{and}\ \ \ \{1,t\}\cap C=\{1\}\; ,
\end{equation*}
or
\begin{equation*}
\{1,t\}\cap A=\{t\}\ \ \ \text{and}\ \ \ \{1,t\}\cap C=\{1,t\}\; ,
\end{equation*}
then
\begin{equation*}
\boldsymbol{x}({}_{-C}\mathrm{T}^{(+)})=\boldsymbol{x}({}_{-A}\mathrm{T}^{(+)})-\boldsymbol{\sigma}(1)+\boldsymbol{\sigma}(2)-\sum_{i\in C-(A\dot\cup\{1\})}(\;\boldsymbol{\sigma}(i)-\boldsymbol{\sigma}(i+1)\;)\; .
\end{equation*}

\item[\rm(iv)]
If
\begin{equation*}
|\{1,t\}\cap A|=0\ \ \ \text{and}\ \ \ \{1,t\}\cap C=\{1,t\}\; ,
\end{equation*}
then
\begin{equation*}
\boldsymbol{x}({}_{-C}\mathrm{T}^{(+)})=\boldsymbol{x}({}_{-A}\mathrm{T}^{(+)})-2\boldsymbol{\sigma}(1)
+\boldsymbol{\sigma}(2)-\boldsymbol{\sigma}(t)-\sum_{i\in C-(A\dot\cup\{1,t\})}(\;\boldsymbol{\sigma}(i)-\boldsymbol{\sigma}(i+1)\;)\; .
\end{equation*}
\end{itemize}
\end{proposition}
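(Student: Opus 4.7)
My plan is to write $C$ as the disjoint union $A\dot\cup(C-A)$ and to unpack the right-hand side by means of Remark~\ref{th:1}. That remark immediately yields
\begin{equation*}
\boldsymbol{x}({}_{-C}\mathrm{T}^{(+)})=\boldsymbol{x}({}_{-A}\mathrm{T}^{(+)})+\boldsymbol{x}({}_{-(C-A)}\mathrm{T}^{(+)})-\boldsymbol{\sigma}(1)\; ,
\end{equation*}
and a second application (with the first piece empty) gives
\begin{equation*}
\boldsymbol{x}({}_{-(C-A)}\mathrm{T}^{(+)})=(1-|C-A|)\,\boldsymbol{\sigma}(1)+\sum_{s\in C-A}\boldsymbol{y}(s)\; .
\end{equation*}
Combining these, the entire problem reduces to the single uniform identity
\begin{equation*}
\boldsymbol{x}({}_{-C}\mathrm{T}^{(+)})-\boldsymbol{x}({}_{-A}\mathrm{T}^{(+)})=-|C-A|\,\boldsymbol{\sigma}(1)+\sum_{s\in C-A}\boldsymbol{y}(s)\; ,
\end{equation*}
and the four cases of the proposition amount to evaluating $\sum_{s\in C-A}\boldsymbol{y}(s)$ according to whether $1$ and/or $t$ lie in $C-A$.

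I would next compute the vectors $\boldsymbol{y}(s)$ explicitly from~(\ref{eq:12})--(\ref{eq:13}). One checks directly that ${}_{-1}\mathrm{T}^{(+)}=R^1$ gives $\boldsymbol{y}(1)=\boldsymbol{\sigma}(2)$; that ${}_{-t}\mathrm{T}^{(+)}=-R^{t-1}$ gives $\boldsymbol{y}(t)=-\boldsymbol{\sigma}(t)$; and, for every $s$ with $2\leq s\leq t-1$, the identity ${}_{-s}\mathrm{T}^{(+)}=R^0-R^{s-1}+R^s$ (verified coordinatewise) gives $\boldsymbol{y}(s)=\boldsymbol{\sigma}(1)-\boldsymbol{\sigma}(s)+\boldsymbol{\sigma}(s+1)$. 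Note that this last formula is also formally consistent with the $s=1$ value, so $s=t$ is the only genuine boundary exception.

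Substituting into the master identity, I would partition $C-A$ into its intersections with $\{1\}$, $\{t\}$, and $\{2,\ldots,t-1\}$. Each $s\in(C-A)\cap\{2,\ldots,t-1\}$ contributes one copy of $\boldsymbol{\sigma}(1)$ together with $-\boldsymbol{\sigma}(s)+\boldsymbol{\sigma}(s+1)$; the presence of $1$ in $C-A$ adds an extra $\boldsymbol{\sigma}(2)$ but no $\boldsymbol{\sigma}(1)$; and the presence of $t$ in $C-A$ contributes $-\boldsymbol{\sigma}(t)$ and no $\boldsymbol{\sigma}(1)$. Hence the number of \emph{missing} copies of $\boldsymbol{\sigma}(1)$ when set against the $-|C-A|\,\boldsymbol{\sigma}(1)$ term equals $0$, $1$, $1$, or $2$ in the four cases respectively, which accounts for the residual terms $0$, $-\boldsymbol{\sigma}(1)$, $-\boldsymbol{\sigma}(1)$, $-2\boldsymbol{\sigma}(1)$ on the right-hand sides of~(i)--(iv); the additional $\boldsymbol{\sigma}(2)$ and $-\boldsymbol{\sigma}(t)$ corrections track whether $1$ and $t$ have been extracted from the telescoping sum.

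I do not expect any real difficulty: the only delicate point is the boundary check for $\boldsymbol{y}(1)$ and $\boldsymbol{y}(t)$, which is a short finite verification against~(\ref{eq:12})--(\ref{eq:13}). Everything else is organized bookkeeping of the $\boldsymbol{\sigma}(1)$ contributions distributed across the four disjoint cases.
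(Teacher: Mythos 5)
Your proof is correct. It differs in organization from the paper's own argument: the paper invokes the closed-form expressions of \cite[Rem.~2.3]{M-SC-II} for $\boldsymbol{x}({}_{-A}\mathrm{T}^{(+)})$ in each configuration of $\{1,t\}\cap A$, and obtains each claim by subtracting two such expressions, which forces a separate computation for every admissible pair of configurations of $A$ and $C$. You instead derive from Remark~\ref{th:1} the single uniform identity $\boldsymbol{x}({}_{-C}\mathrm{T}^{(+)})-\boldsymbol{x}({}_{-A}\mathrm{T}^{(+)})=-|C-A|\cdot\boldsymbol{\sigma}(1)+\sum_{s\in C-A}\boldsymbol{y}(s)$ and push all the case analysis into the difference set $C-A$, whose intersection with $\{1,t\}$ is what the four items of the proposition really classify. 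This makes it transparent why, for instance, the two distinct hypotheses of item (ii) yield the same conclusion (both give $(C-A)\cap\{1,t\}=\{t\}$), something the paper's pairwise subtraction leaves implicit; the price is that you must verify the values $\boldsymbol{y}(1)=\boldsymbol{\sigma}(2)$, $\boldsymbol{y}(t)=-\boldsymbol{\sigma}(t)$ and $\boldsymbol{y}(s)=\boldsymbol{\sigma}(1)-\boldsymbol{\sigma}(s)+\boldsymbol{\sigma}(s+1)$ for $2\leq s\leq t-1$ directly from~(\ref{eq:12})--(\ref{eq:13}), which the paper gets by citation; your verifications are correct. Both arguments rest on the same underlying decomposition in the basis $(R^0,\ldots,R^{t-1})$, but yours is the more self-contained and its bookkeeping is cleaner.
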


\begin{proof} We will use~\cite[Rem.~2.3]{M-SC-II}:

\noindent{\rm(a)}
If
\begin{equation*}
\{1,t\}\cap C=\{1\}\; ,
\end{equation*}
then
\begin{multline*}
A\subseteq C\ \ \ \Longrightarrow \ \ \ \boldsymbol{x}({}_{-A}\mathrm{T}^{(+)})=
\begin{cases}
\phantom{-}\boldsymbol{\sigma}(2)-\sum_{i\in A-\{1\}}(\;\boldsymbol{\sigma}(i)-\boldsymbol{\sigma}(i+1)\;)\; ,
\\ \hspace{6cm}\text{if $1\in A$}\; ,\\
\phantom{-}\boldsymbol{\sigma}(1)-\sum_{i\in A}(\;\boldsymbol{\sigma}(i)-\boldsymbol{\sigma}(i+1)\;)\; ,\ \, \text{if $1\not\in A$}\;
\end{cases}\\
=
\begin{cases}
\boldsymbol{x}({}_{-C}\mathrm{T}^{(+)})+\sum_{i\in C-A}(\;\boldsymbol{\sigma}(i)-\boldsymbol{\sigma}(i+1)\;)\; , & \text{if $1\in A$}\; ,\\
\boldsymbol{x}({}_{-C}\mathrm{T}^{(+)})+\boldsymbol{\sigma}(1)-\boldsymbol{\sigma}(2)+\sum_{i\in C-(A\dot\cup\{1\})}(\;\boldsymbol{\sigma}(i)-\boldsymbol{\sigma}(i+1)\;)\; , & \text{if $1\not\in A$}\; .
\end{cases}
\end{multline*}

\noindent{\rm(b)}
If
\begin{equation*}
\{1,t\}\cap C=\{1,t\}\; ,
\end{equation*}
then
\begin{multline*}
A\subseteq C\ \ \ \Longrightarrow \ \ \ \\ \boldsymbol{x}({}_{-A}\mathrm{T}^{(+)})=
\begin{cases}
\phantom{-}\boldsymbol{\sigma}(2)-\sum_{i\in A-\{1\}}(\;\boldsymbol{\sigma}(i)-\boldsymbol{\sigma}(i+1)\;)\; ,
\\ \hspace{6cm}\text{if $\{1,t\}\cap A=\{1\}$}\; ,\\
-\boldsymbol{\sigma}(1)+\boldsymbol{\sigma}(2)-\boldsymbol{\sigma}(t)-\sum_{i\in A-\{1,t\}}(\;\boldsymbol{\sigma}(i)-\boldsymbol{\sigma}(i+1)\;)\; , \\ \hspace{6cm}\text{if $\{1,t\}\cap A=\{1,t\}$}\; ,\\
\phantom{-}\boldsymbol{\sigma}(1)-\sum_{i\in A}(\;\boldsymbol{\sigma}(i)-\boldsymbol{\sigma}(i+1)\;)\; ,\ \text{if $|\{1,t\}\cap A|=0$}\; ,\\
-\boldsymbol{\sigma}(t)-\sum_{i\in A-\{t\}}(\;\boldsymbol{\sigma}(i)-\boldsymbol{\sigma}(i+1)\;)\; ,
\\ \hspace{6cm}\text{if $\{1,t\}\cap A=\{t\}$}\;
\end{cases}
\\=
\begin{cases}
\boldsymbol{x}({}_{-C}\mathrm{T}^{(+)})+\boldsymbol{\sigma}(1)+\boldsymbol{\sigma}(t)+\sum_{i\in C-(A\dot\cup\{t\})}(\;\boldsymbol{\sigma}(i)-\boldsymbol{\sigma}(i+1)\;)\; ,
\\ \hspace{8cm}\text{if $\{1,t\}\cap A=\{1\}$}\; ,\\
\boldsymbol{x}({}_{-C}\mathrm{T}^{(+)})+\sum_{i\in C-A}(\;\boldsymbol{\sigma}(i)-\boldsymbol{\sigma}(i+1)\;)\; ,
\ \ \ \ \ \ \ \ \!\text{if $\{1,t\}\cap A=\{1,t\}$}\; ,\\
\boldsymbol{x}({}_{-C}\mathrm{T}^{(+)})+2\boldsymbol{\sigma}(1)-\boldsymbol{\sigma}(2)+\boldsymbol{\sigma}(t)+\sum_{i\in C-(A\dot\cup\{1,t\})}(\;\boldsymbol{\sigma}(i)-\boldsymbol{\sigma}(i+1)\;)\; ,
\\ \hspace{8cm}\text{if $|\{1,t\}\cap A|=0$}\; ,\\
\boldsymbol{x}({}_{-C}\mathrm{T}^{(+)})+\boldsymbol{\sigma}(1)-\boldsymbol{\sigma}(2)+\sum_{i\in C-(A\dot\cup\{1\})}(\;\boldsymbol{\sigma}(i)-\boldsymbol{\sigma}(i+1)\;)\; ,
\\ \hspace{8cm}\text{if $\{1,t\}\cap A=\{t\}$}\; .
\end{cases}
\end{multline*}

\noindent{\rm(c)}
If
\begin{equation*}
|\{1,t\}\cap C|=0\; ,
\end{equation*}
then
\begin{multline*}
A\subseteq C\ \ \ \Longrightarrow \ \ \
\boldsymbol{x}({}_{-A}\mathrm{T}^{(+)})=
\boldsymbol{\sigma}(1)-\sum_{i\in A}(\;\boldsymbol{\sigma}(i)-\boldsymbol{\sigma}(i+1)\;)\\=
\boldsymbol{x}({}_{-C}\mathrm{T}^{(+)})+\sum_{i\in C-A}(\;\boldsymbol{\sigma}(i)-\boldsymbol{\sigma}(i+1)\;)
\; .
\end{multline*}

\noindent{\rm(d)}
If
\begin{equation*}
\{1,t\}\cap C=\{t\}\; ,
\end{equation*}
then
\begin{multline*}
A\subseteq C\ \ \ \Longrightarrow \ \ \ \\ \boldsymbol{x}({}_{-A}\mathrm{T}^{(+)})=
\begin{cases}
\phantom{-}\boldsymbol{\sigma}(1)-\sum_{i\in A}(\;\boldsymbol{\sigma}(i)-\boldsymbol{\sigma}(i+1)\;)\; , & \text{if $t\not\in A$}\; ,\\
-\boldsymbol{\sigma}(t)-\sum_{i\in A-\{t\}}(\;\boldsymbol{\sigma}(i)-\boldsymbol{\sigma}(i+1)\;)\; , & \text{if $t\in A$}\;
\end{cases}
\\=
\begin{cases}
\boldsymbol{x}({}_{-C}\mathrm{T}^{(+)})+\boldsymbol{\sigma}(1)+\boldsymbol{\sigma}(t)+\sum_{i\in C-(A\dot\cup\{t\})}(\;\boldsymbol{\sigma}(i)-\boldsymbol{\sigma}(i+1)\;)\; , & \text{if $t\not\in A$}\; ,\\
\boldsymbol{x}({}_{-C}\mathrm{T}^{(+)})+\sum_{i\in C-A}(\;\boldsymbol{\sigma}(i)-\boldsymbol{\sigma}(i+1)\;)\; , & \text{if $t\in A$}\; .
\end{cases}
\end{multline*}
\end{proof}

\section{Statistics on partitions of the negative parts of vertices of the hypercube graph and on the decompositions of vertices}

Recall that for any {\em odd\/} integer $\ell\in E_t$, we have
\begin{equation*}
|\{T\in\{1,-1\}^t\colon |\boldsymbol{Q}(T,\boldsymbol{R})|=\ell\}|=2\tbinom{t}{\ell}\; ,
\end{equation*}
see~\cite[Th.~13.6]{AM-PROM-I}.

Note also that $|\boldsymbol{Q}(\mathrm{T}^{(+)},\boldsymbol{R})|=|\boldsymbol{Q}(\mathrm{T}^{(-)},\boldsymbol{R})|=1$.

Now let $j\in [t-1]$. If
\begin{equation*}
j<\tfrac{\ell-1}{2}\ \ \ \text{or}\ \ \ j>t-\tfrac{\ell-1}{2}\; ,
\end{equation*}
then
\begin{equation*}
|\{T\in\{1,-1\}^t\colon |T^-|=j,\ |\boldsymbol{Q}(T,\boldsymbol{R})|=\ell\}|=0\; .
\end{equation*}
If
\begin{equation*}
\tfrac{\ell-1}{2}\leq j\leq t-\tfrac{\ell-1}{2}\; ,
\end{equation*}
then
\begin{gather*}
|\{T\in\{1,-1\}^t\colon |T^-|=j,\ |\boldsymbol{Q}(T,\boldsymbol{R})|=\ell\}|\\=
|\{T\in\{1,-1\}^t\colon |T^-|=t-j,\ |\boldsymbol{Q}(T,\boldsymbol{R})|=\ell\}|\\=
\binom{j-1}{\frac{\ell-1}{2}}\binom{t-j}{\frac{\ell-1}{2}}+
\binom{t-j-1}{\frac{\ell-1}{2}}\binom{j}{\frac{\ell-1}{2}}\; ,
\end{gather*}
see~\cite[Th.~2.7]{M-SC-II}.

Given {\em odd\/} integers $\ell',\ell'',\ell\in E_t$ and certain positive integers $j'$ and $j''$, in this section we obtain statistics related to  the following family of {\em ordered pairs\/} $(A,B)$ of disjoint {\em unordered subsets\/} $A$ and $B$ of the ground set $E_t$:
\begin{multline}
\label{eq:1}
\bigl\{(A,B)\in\mathbf{2}^{[t]}\times\mathbf{2}^{[t]}
\colon\ \ |A\cap B|=0,\;\ \ 0<|A|=:j'<t,\;\ \  0<|B|=:j''<t,\\  j'+j''<t,
\\ |\boldsymbol{Q}({}_{-A}\mathrm{T}^{(+)},\boldsymbol{R})|=\ell',\;\ \
|\boldsymbol{Q}({}_{-B}\mathrm{T}^{(+)},\boldsymbol{R})|=\ell'',\;\ \
 |\boldsymbol{Q}({}_{-(A\dot\cup B)}\mathrm{T}^{(+)},\boldsymbol{R})|=\ell\bigr\}\; .
\end{multline}

\begin{theorem}
\label{th:3}
\begin{itemize}
\item[\rm(i)]
In the family~{\rm(\ref{eq:1})} there are
\begin{multline}
\label{eq:7}
\binom{t-(j'+j'')-1}{\frac{\ell-1}{2}}\binom{j'-1}{\frac{\ell'-3}{2}}\binom{j''-1}{\frac{\ell''-3}{2}}\\
\times\begin{cases}
\left(\substack{\frac{\ell-1}{2}\\ \frac{\ell+\ell'-\ell''-1}{4}}\right)\!
\left(\substack{\frac{\ell+\ell'+\ell''-7}{4}\\ \frac{\ell-3}{2}}\right)\; , & \text{if $\frac{\ell+\ell'+\ell''-1}{2}$
odd\; ,}\\
\quad\\
\frac{\ell+\ell'-\ell''+1}{2}
\left(\substack{\frac{\ell-1}{2}\\ \frac{\ell+\ell'-\ell''+1}{4}}\right)\!
\left(\substack{\frac{\ell+\ell'+\ell''-9}{4}\\ \frac{\ell-3}{2}}\right)\; , & \text{if $\frac{\ell+\ell'+\ell''-1}{2}$
even}
\end{cases}
\end{multline}
pairs~$(A,B)$ of sets $A$ and $B$ such that
\begin{equation}
\label{eq:4}
|\{1,t\}\cap A|=|\{1,t\}\cap B|=0\; .
\end{equation}

\item[\rm(ii)]
In the family~{\rm(\ref{eq:1})} there are
\begin{multline}
\label{eq:8}
\binom{t-(j'+j'')-1}{\frac{\ell-3}{2}}\binom{j'-1}{\frac{\ell'-1}{2}}\binom{j''-1}{\frac{\ell''-3}{2}}\\
\times
\begin{cases}
\left(\substack{
\frac{\ell'-1}{2}
\\
\frac{\ell+\ell'-\ell''-1}{4}
}\right)\!
\left(\substack{
\frac{\ell+\ell'+\ell''-7}{4}
\\
\frac{\ell'-3}{2}
}\right)\; , & \text{if $\frac{\ell+\ell'+\ell''-1}{2}$
odd\; ,}\\
\quad\\
\frac{\ell+\ell'-\ell''+1}{2}
\left(\substack{
\frac{\ell'-1}{2}
\\
\frac{\ell+\ell'-\ell''+1}{4}
}\right)\!
\left(\substack{
\frac{\ell+\ell'+\ell''-9}{4}
\\
\frac{\ell'-3}{2}
}\right)\; , & \text{if $\frac{\ell+\ell'+\ell''-1}{2}$
even}
\end{cases}
\end{multline}
pairs~$(A,B)$ of sets $A$ and $B$ such that
\begin{equation}
\label{eq:6}
\{1,t\}\cap A=\{1,t\}\ \ \ \text{and}\ \ \ |\{1,t\}\cap B|=0\; .
\end{equation}

\item[\rm(iii)]
In the family~{\rm(\ref{eq:1})} there are
\begin{multline}
\label{eq:9}
\binom{t-(j'+j'')-1}{\frac{\ell-1}{2}}
\binom{j'-1}{\frac{\ell'-3}{2}}
\binom{j''-1}{\frac{\ell''-1}{2}}\\ \times
\begin{cases}
\frac{\ell+\ell''-\ell'+3}{2}
\left(\substack{
\frac{\ell-1}{2}
\\
\frac{\ell+\ell''-\ell'+1}{4}
}\right)\!
\left(\substack{
\frac{\ell+\ell'+\ell''-5}{4}
\\
\frac{\ell-1}{2}
}\right)\; ,\ \ \ \text{if $\frac{\ell+\ell'+\ell''+1}{2}$
odd\; ,}\\
\quad\\
\left(\substack{
\frac{\ell-1}{2}
\\
\frac{\ell+\ell''-\ell'-1}{4}
}\right)\!
\left(\substack{
\frac{\ell+\ell'+\ell''-3}{4}
\\
\frac{\ell-1}{2}
}\right)
+\frac{\ell+\ell''-\ell'+3}{2}
\left(\substack{
\frac{\ell-1}{2}
\\
\frac{\ell+\ell''-\ell'+3}{4}
}\right)\!
\left(\substack{
\frac{\ell+\ell'+\ell''-7}{4}
\\
\frac{\ell-1}{2}
}\right)\; ,\\
\hspace{6.4cm}\text{if $\frac{\ell+\ell'+\ell''+1}{2}$
even}
\end{cases}
\end{multline}
pairs~$(A,B)$ of sets~$A$ and~$B$ such that
\begin{equation}
\label{eq:5}
|\{1,t\}\cap A|=0\ \ \ \text{and} \ \ \ \{1,t\}\cap B=\{t\}\; .
\end{equation}

\item[\rm(iv)]
In the family~{\rm(\ref{eq:1})} there are
\begin{multline}
\label{eq:10}
\binom{t-(j'+j'')-1}{\frac{\ell-1}{2}}
\binom{j'-1}{\frac{\ell'-1}{2}}
\binom{j''-1}{\frac{\ell''-3}{2}}\\ \times
\begin{cases}
\frac{\ell+\ell'-\ell''+3}{2}
\left(\substack{
\frac{\ell'-1}{2}
\\
\frac{\ell+\ell'-\ell''+1}{4}
}\right)\!
\left(\substack{
\frac{\ell+\ell'+\ell''-5}{4}
\\
\frac{\ell'-1}{2}
}\right)\; ,\ \ \ \ \ \ \ \ \text{if $\frac{\ell+\ell'+\ell''+1}{2}$
odd\; ,}\\
\quad\\
\left(\substack{
\frac{\ell'-1}{2}
\\
\frac{\ell+\ell'-\ell''-1}{4}
}\right)\!
\left(\substack{
\frac{\ell+\ell'+\ell''-3}{4}
\\
\frac{\ell'-1}{2}
}\right)
+\frac{\ell+\ell'-\ell''+3}{2}
\left(\substack{
\frac{\ell'-1}{2}
\\
\frac{\ell+\ell'-\ell''+3}{4}
}\right)\!
\left(\substack{
\frac{\ell+\ell'+\ell''-7}{4}
\\
\frac{\ell'-1}{2}
}\right)\; ,
\\ \hspace{7cm}\!\text{if $\frac{\ell+\ell'+\ell''+1}{2}$
even}
\end{cases}
\end{multline}
pairs~$(A,B)$ of sets~$A$ and~$B$ such that
\begin{equation}
\label{eq:2}
\{1,t\}\cap A=\{1\}\ \ \ \text{and}\ \ \ |\{1,t\}\cap B|=0\; .
\end{equation}

\item[\rm(v)]
In the family~{\rm(\ref{eq:1})} there are
\begin{multline}
\label{eq:11}
\binom{t-(j'+j'')-1}{\frac{\ell-3}{2}}
\binom{j'-1}{\frac{\ell'-1}{2}}
\binom{j''-1}{\frac{\ell''-1}{2}}\\ \times
\begin{cases}
\frac{\ell'+\ell''-\ell+3}{2}
\left(\substack{
\frac{\ell'-1}{2}
\\
\frac{\ell'+\ell''-\ell+1}{4}
}\right)\!
\left(\substack{
\frac{\ell+\ell'+\ell''-5}{4}
\\
\frac{\ell'-1}{2}
}\right)\; ,\ \ \ \  \text{if $\frac{\ell+\ell'+\ell''+1}{2}$
odd\; ,}\\
\quad\\
\left(\substack{
\frac{\ell'-1}{2}
\\
\frac{\ell'+\ell''-\ell-1}{4}
}\right)\!
\left(\substack{
\frac{\ell+\ell'+\ell''-3}{4}
\\
\frac{\ell'-1}{2}
}\right)
+\frac{\ell'+\ell''-\ell+3}{2}
\left(\substack{
\frac{\ell'-1}{2}
\\
\frac{\ell'+\ell''-\ell+3}{4}
}\right)\!
\left(\substack{
\frac{\ell+\ell'+\ell''-7}{4}
\\
\frac{\ell'-1}{2}
}\right)\; ,
\\ \hspace{6.4cm}\text{if $\frac{\ell+\ell'+\ell''+1}{2}$
even}
\end{cases}
\end{multline}
pairs~$(A,B)$ of sets~$A$ and~$B$ such that
\begin{equation}
\label{eq:3}
\{1,t\}\cap A=\{1\}\ \ \ \text{and}\ \ \ \{1,t\}\cap B=\{t\}\; .
\end{equation}

\item[\rm(vi)]
In the family~{\rm(\ref{eq:1})} there are
\begin{multline*}
\binom{t-(j'+j'')-1}{\frac{\ell-3}{2}}\binom{j'-1}{\frac{\ell'-3}{2}}\binom{j''-1}{\frac{\ell''-1}{2}}\\
\times
\begin{cases}
\left(\substack{
\frac{\ell''-1}{2}
\\
\frac{\ell+\ell''-\ell'-1}{4}
}\right)\!
\left(\substack{
\frac{\ell+\ell'+\ell''-7}{4}
\\
\frac{\ell''-3}{2}
}\right)\; , & \text{if $\frac{\ell+\ell'+\ell''-1}{2}$
odd\; ,}\\
\quad\\
\frac{\ell+\ell''-\ell'+1}{2}
\left(\substack{
\frac{\ell''-1}{2}
\\
\frac{\ell+\ell''-\ell'+1}{4}
}\right)\!
\left(\substack{
\frac{\ell+\ell'+\ell''-9}{4}
\\
\frac{\ell''-3}{2}
}\right)\; , & \text{if $\frac{\ell+\ell'+\ell''-1}{2}$
even}
\end{cases}
\end{multline*}
pairs~$(A,B)$ of sets $A$ and $B$ such that
\begin{equation*}
|\{1,t\}\cap A|=0 \ \ \ \text{and}\ \ \ \{1,t\}\cap B=\{1,t\}\; .
\end{equation*}

\item[\rm(vii)]
In the family~{\rm(\ref{eq:1})} there are
\begin{multline*}
\binom{t-(j'+j'')-1}{\frac{\ell-1}{2}}\binom{j'-1}{\frac{\ell'-1}{2}}\binom{j''-1}{\frac{\ell''-3}{2}}\\ \times
\begin{cases}
\frac{\ell+\ell'-\ell''+3}{2}
\left(\substack{
\frac{\ell-1}{2}
\\
\frac{\ell+\ell'-\ell''+1}{4}
}\right)\!
\left(\substack{
\frac{\ell+\ell'+\ell''-5}{4}
\\
\frac{\ell-1}{2}
}\right)\; ,\ \ \ \text{if $\frac{\ell+\ell'+\ell''+1}{2}$
odd\; ,}\\
\quad\\
\left(\substack{
\frac{\ell-1}{2}
\\
\frac{\ell+\ell'-\ell''-1}{4}
}\right)\!
\left(\substack{
\frac{\ell+\ell'+\ell''-3}{4}
\\
\frac{\ell-1}{2}
}\right)
+\frac{\ell+\ell'-\ell''+3}{2}
\left(\substack{
\frac{\ell-1}{2}
\\
\frac{\ell+\ell'-\ell''+3}{4}
}\right)\!
\left(\substack{
\frac{\ell+\ell'+\ell''-7}{4}
\\
\frac{\ell-1}{2}
}\right)\; ,\\
\hspace{6.4cm}\text{if $\frac{\ell+\ell'+\ell''+1}{2}$
even}
\end{cases}
\end{multline*}
pairs~$(A,B)$ of sets~$A$ and~$B$ such that
\begin{equation*}
\{1,t\}\cap A=\{t\}\ \ \ \text{and} \ \ \ |\{1,t\}\cap B|=0\; .
\end{equation*}

\item[\rm(viii)]
In the family~{\rm(\ref{eq:1})} there are
\begin{multline*}
\binom{t-(j'+j'')-1}{\frac{\ell-1}{2}}\binom{j'-1}{\frac{\ell'-3}{2}}\binom{j''-1}{\frac{\ell''-1}{2}}\\ \times
\begin{cases}
\frac{\ell+\ell''-\ell'+3}{2}
\left(\substack{
\frac{\ell''-1}{2}
\\
\frac{\ell+\ell''-\ell'+1}{4}
}\right)\!
\left(\substack{
\frac{\ell+\ell'+\ell''-5}{4}
\\
\frac{\ell''-1}{2}
}\right)\; ,\ \ \ \ \ \ \ \ \text{if $\frac{\ell+\ell'+\ell''+1}{2}$
odd\; ,}\\
\quad\\
\left(\substack{
\frac{\ell''-1}{2}
\\
\frac{\ell+\ell''-\ell'-1}{4}
}\right)\!
\left(\substack{
\frac{\ell+\ell'+\ell''-3}{4}
\\
\frac{\ell''-1}{2}
}\right)
+\frac{\ell+\ell''-\ell'+3}{2}
\left(\substack{
\frac{\ell''-1}{2}
\\
\frac{\ell+\ell''-\ell'+3}{4}
}\right)\!
\left(\substack{
\frac{\ell+\ell'+\ell''-7}{4}
\\
\frac{\ell''-1}{2}
}\right)\; ,
\\ \hspace{7cm}\!\text{if $\frac{\ell+\ell'+\ell''+1}{2}$
even}
\end{cases}
\end{multline*}
pairs~$(A,B)$ of sets~$A$ and~$B$ such that
\begin{equation*}
|\{1,t\}\cap A|=0\ \ \ \text{and}\ \ \  \{1,t\}\cap B=\{1\}\; .
\end{equation*}

\item[\rm(ix)]
In the family~{\rm(\ref{eq:1})} there are
\begin{multline*}
\binom{t-(j'+j'')-1}{\frac{\ell-3}{2}}\binom{j'-1}{\frac{\ell'-1}{2}}\binom{j''-1}{\frac{\ell''-1}{2}}\\ \times
\begin{cases}
\frac{\ell'+\ell''-\ell+3}{2}
\left(\substack{
\frac{\ell''-1}{2}
\\
\frac{\ell'+\ell''-\ell+1}{4}
}\right)\!
\left(\substack{
\frac{\ell+\ell'+\ell''-5}{4}
\\
\frac{\ell''-1}{2}
}\right)\; ,\ \ \ \  \text{if $\frac{\ell+\ell'+\ell''+1}{2}$
odd\; ,}\\
\quad\\
\left(\substack{
\frac{\ell''-1}{2}
\\
\frac{\ell'+\ell''-\ell-1}{4}
}\right)\!
\left(\substack{
\frac{\ell+\ell'+\ell''-3}{4}
\\
\frac{\ell''-1}{2}
}\right)
+\frac{\ell'+\ell''-\ell+3}{2}
\left(\substack{
\frac{\ell''-1}{2}
\\
\frac{\ell'+\ell''-\ell+3}{4}
}\right)\!
\left(\substack{
\frac{\ell+\ell'+\ell''-7}{4}
\\
\frac{\ell''-1}{2}
}\right)\; ,
\\ \hspace{6.4cm}\text{if $\frac{\ell+\ell'+\ell''+1}{2}$
even}
\end{cases}
\end{multline*}
pairs~$(A,B)$ of sets~$A$ and~$B$ such that
\begin{equation*}
\{1,t\}\cap A=\{t\}\ \ \ \text{and}\ \ \ \{1,t\}\cap B=\{1\}\; .
\end{equation*}
\end{itemize}
\end{theorem}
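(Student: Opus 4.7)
The plan is to enumerate pairs $(A, B)$ in each of the nine cases by a uniform three-factor scheme---placement, compositions, interleaving---with local adjustments for the boundary conditions $\{1,t\} \cap A$ and $\{1,t\} \cap B$. The first step is to use the formulas for $\boldsymbol{x}({}_{-S}\mathrm{T}^{(+)})$ worked out in parts (a)--(d) of the preceding Proposition's proof to establish the dictionary: if $S \subseteq E_t$ has $k_S$ maximal runs (maximal intervals of consecutive integers in $S$), then
\[
|\boldsymbol{Q}({}_{-S}\mathrm{T}^{(+)}, \boldsymbol{R})| = 2k_S + 1 \text{ when } \{1,t\} \cap S = \emptyset, \quad 2k_S - 1 \text{ otherwise}.
\]
Applied to $S \in \{A, B, A \dot\cup B\}$, this converts the hypotheses $|\boldsymbol{Q}| = \ell', \ell'', \ell$ into integer conditions on the run counts $k_A, k_B, k_{A \dot\cup B}$.

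In each case, the count factors as placement $\times$ compositions $\times$ interleaving. Illustrating with case (i) ($A, B \subseteq [2, t-1]$): a stars-and-bars calculation (with the two outer gaps forced to be $\geq 1$ because $1, t \notin A \dot\cup B$) gives the placement factor $\binom{t-(j'+j'')-1}{(\ell-1)/2}$ for arranging $k_{A \dot\cup B}$ disjoint nonempty intervals of total length $j'+j''$ inside $[2, t-1]$; the composition factors $\binom{j'-1}{(\ell'-3)/2} \binom{j''-1}{(\ell''-3)/2}$ count the compositions of $j'$ and $j''$ into $k_A$ and $k_B$ positive parts (the subrun-length sequences of $A$ and $B$); and the interleaving factor---the bracketed expression in~(\ref{eq:7})---counts distributions of the $k_A + k_B$ subruns into the $k_{A \dot\cup B}$ runs of $A \dot\cup B$ with $A$/$B$ alternation inside each run. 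Parametrizing interleavings by the profile $(p_1, \ldots, p_{k_{A \dot\cup B}})$ of subruns-per-run, the conditional count per profile is $\binom{u}{u_R} \cdot 2^v$, where $u$ (resp.\ $v$) is the number of odd (resp.\ even) $p_i$'s and $u_R = (k_A - k_B + u)/2$ is the number of odd-$p_i$ runs starting with an $A$-subrun; equivalently, the total interleaving count equals the coefficient $[x^{k_A} y^{k_B}]$ of $\bigl((x + y + 2xy)/(1 - xy)\bigr)^{k_{A \dot\cup B}}$, which reduces by a Vandermonde-type identity to the displayed closed form.

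Cases (ii)--(ix) follow the same template with local adjustments: the outer binomial becomes $\binom{\cdot}{(\ell-3)/2}$ when $A \dot\cup B$ must contain both $1$ and $t$ (both outer gaps vanish), while the composition indices for $A$ or $B$ shift from $(\ell'-3)/2$ to $(\ell'-1)/2$ (and analogously for $\ell''$) whenever the relevant set meets an endpoint; membership of $1$ (resp.\ $t$) in $A$ or $B$ additionally pins the starting (resp.\ ending) flavor of the leftmost (resp.\ rightmost) run of $A \dot\cup B$, modifying the generating function at the extreme runs and restricting the admissible values of $u_R$ there. The $A \leftrightarrow B$ symmetry pairs cases (ii)/(vi), (iii)/(vii), (iv)/(viii), (v)/(ix) via the substitutions $\ell' \leftrightarrow \ell''$ and $j' \leftrightarrow j''$, leaving case (i) unpaired.

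The main obstacle is the closed-form evaluation of the interleaving sum. After conditioning on $u$ (forced to have the parity of $k_A + k_B$ since $\sum p_i = k_A + k_B$), the coefficient-extraction collapses by a Vandermonde convolution to the bracketed expression. The two-case parity split---odd vs.\ even $(\ell + \ell' + \ell'' \pm 1)/2$---reflects whether only one value of $u$ contributes (giving a single binomial product) or two consecutive values contribute (giving the two-term expression); the rational prefactors $\frac{\ell + \ell' - \ell'' + 1}{2}$, $\frac{\ell + \ell'' - \ell' + 3}{2}$, etc.\ in the brackets arise from these two-term combinations telescoping, together with the boundary pinning of $u_R$ at the extreme runs when $1$ or $t$ lies in $A$ or $B$. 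Carrying this Vandermonde bookkeeping consistently through the nine configurations is the technical heart of the proof.
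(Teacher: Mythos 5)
Your proposal is correct and follows essentially the same route as the paper: the decomposition of $A$, $B$ and the complement into maximal intervals, the dictionary $\ell=2\varrho\pm 1$ depending on whether the set meets $\{1,t\}$, and the factorization into composition counts $\mathtt{c}(m;n)=\binom{n-1}{m-1}$ for the interval lengths times an interleaving count are exactly the paper's scheme, and your interleaving generating function $\bigl((x+y+2xy)/(1-xy)\bigr)^{k}$ is precisely the ternary-Smirnov-word generating function underlying $f_{\theta}$ in the paper's Appendix. The only cosmetic difference is that the paper names the interleaving factor as the number $\mathfrak{T}(\mathfrak{s}',\mathfrak{s}'';n(\theta),n(\alpha),n(\beta))$ of ternary Smirnov words with prescribed first and last letters and reads off the closed forms from Remark~\ref{th:2}, rather than performing the Vandermonde-type coefficient extraction directly.
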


Before proceeding to the proof of the theorem, recall that the {\em Smirnov words\/} (see Appendix on page~\pageref{appendixpage}) are defined to be the words, any two consecutive letters of which are distinct.

Let $(\theta,\alpha,\beta)$ be a three-letter alphabet.

Given two letters $\mathfrak{s}'\in(\theta,\alpha,\beta)$ and $\mathfrak{s}''\in(\theta,\alpha,\beta)$, we denote by
\begin{equation*}
\mathfrak{T}(\mathfrak{s}', \mathfrak{s}'';n(\theta),n(\alpha),n(\beta))
\end{equation*}
the number of ternary Smirnov words (that start with $\mathfrak{s}'$ and end with $\mathfrak{s}''$)
with exactly $n(\theta)$ letters $\theta$, with $n(\alpha)$ letters $\alpha$, and with~$n(\beta)$ letters $\beta$.

We denote by $\mathtt{c}(m;n)$, where $\mathtt{c}(m;n)=\tbinom{n-1}{m-1}$, the number of {\em compositions\/} of a positive integer~$n$ with $m$ positive parts.

We will regard sets $A$ and $B$ composing pairs of the family~(\ref{eq:1})
as disjoint unions
\begin{align*}
A&=[i'_1,k'_1]\;\dot\cup\;[i'_2,k'_2]\;\dot\cup\;\cdots\;\dot\cup\;[i'_{\varrho(A)},k'_{\varrho(A)}]
\intertext{and}
B&=[i''_1,k''_1]\;\dot\cup\;[i''_2,k''_2]\;\dot\cup\;\cdots\;\dot\cup\;[i''_{\varrho(B)},k''_{\varrho(B)}]
\end{align*}
of intervals $[i,k]:=\{i,i+1,\ldots,k\}$ such that
\begin{equation*}
k'_1+2\leq i'_2,\ \ k'_2+2\leq i'_3,\ \ \ldots,\ \
k'_{\varrho(A)-1}+2\leq i'_{\varrho(A)}
\end{equation*}
and
\begin{equation*}
k''_1+2\leq i''_2,\ \ k''_2+2\leq i''_3,\ \ \ldots,\ \
k''_{\varrho(B)-1}+2\leq i''_{\varrho(B)}\; .
\end{equation*}

\begin{proof}
\noindent{\rm(i)} Let us count the number of pairs $(A,B)$ in the family~(\ref{eq:1}) such that
\begin{equation*}
|\{1,t\}\cap A|=|\{1,t\}\cap B|=0\; .
\end{equation*}
For any such a pair, by~\cite[Lem.~2.6(iii)]{M-SC-II}, we know that
\begin{equation*}
\varrho(A)=\tfrac{\ell'-1}{2}\ \ \ \text{and}\ \ \ \varrho(B)=\tfrac{\ell''-1}{2}\; ,
\end{equation*}
and the set $E_t-(A\dot\cup B)$ is a disjoint union of $\tfrac{\ell+1}{2}$ intervals.

For each pair~$(A,B)$, pick an arbitrary {\em system\/} (arranged in {\em ascending order}) of {\em distinct representatives\/} $(e_1<e_2<\cdots<e_{(\ell+\ell'+\ell''-1)/2})$ of the intervals composing the sets $A$, $B$ and $E_t-(A\dot\cup B)$.
By making the substitutions
\begin{equation*}
e_i\mapsto\begin{cases}
\theta\; , & \text{if $e_i\in E_t-(A\dot\cup B)$\; ,}\\
\alpha\; , & \text{if $e_i\in A$\; ,}\\
\beta\; , & \text{if $e_i\in B$\; ,}
\end{cases}\ \ \ \ \ 1\leq i\leq (\ell+\ell'+\ell''-1)/2\; ,
\end{equation*}
for all of the pairs, we get
\begin{equation*}
\mathfrak{T}\left(\theta,\theta; \tfrac{\ell+1}{2}, \tfrac{\ell'-1}{2}, \tfrac{\ell''-1}{2}\right)
\end{equation*}
different {\em ternary Smirnov words}, of length $(\ell+\ell'+\ell''-1)/2$, that start with~$\theta$,
end with
$\theta$, and contain exactly $\tfrac{\ell+1}{2}$ letters~$\theta$, $\tfrac{\ell'-1}{2}$ letters~$\alpha$, and~$\tfrac{\ell''-1}{2}$ letters~$\beta$.

By Remark~\ref{th:2}(i), we see that
\begin{multline*}
\mathfrak{T}\left(\theta,\theta; \tfrac{\ell+1}{2}, \tfrac{\ell'-1}{2}, \tfrac{\ell''-1}{2}\right)\\=
\begin{cases}
\left(\substack{\frac{\ell+1}{2}-1\\ \frac{\frac{\ell+1}{2}+\frac{\ell'-1}{2}-\frac{\ell''-1}{2}-1}{2}}\right)\!
\left(\substack{\frac{\frac{\ell+1}{2}+\frac{\ell'-1}{2}+\frac{\ell''-1}{2}-3}{2}\\ \frac{\ell+1}{2}-2}\right)\; ,\ \ \ \  \text{if $\frac{\ell+\ell'+\ell''-1}{2}$
odd\; ,}\\
\quad\\
(\frac{\ell+1}{2}+\frac{\ell'-1}{2}-\frac{\ell''-1}{2})\cdot
\left(\substack{\frac{\ell+1}{2}-1\\ \frac{\frac{\ell+1}{2}+\frac{\ell'-1}{2}-\frac{\ell''-1}{2}}{2}}\right)\!
\left(\substack{\frac{\frac{\ell+1}{2}+\frac{\ell'-1}{2}+\frac{\ell''-1}{2}}{2}-2\\ \frac{\ell+1}{2}-2}\right)\; ,\\  \hspace{7.3cm}\text{if $\frac{\ell+\ell'+\ell''-1}{2}$
even\; ,}
\end{cases}
\end{multline*}
that is,
\begin{multline*}
\mathfrak{T}\left(\theta,\theta; \tfrac{\ell+1}{2}, \tfrac{\ell'-1}{2}, \tfrac{\ell''-1}{2}\right)\\=
\begin{cases}
\left(\substack{\frac{\ell-1}{2}\\ \frac{\ell+\ell'-\ell''-1}{4}}\right)\!
\left(\substack{\frac{\ell+\ell'+\ell''-7}{4}\\ \frac{\ell-3}{2}}\right)\; , & \text{if $\frac{\ell+\ell'+\ell''-1}{2}$
odd\; ,}\\
\quad\\
\frac{\ell+\ell'-\ell''+1}{2}
\left(\substack{\frac{\ell-1}{2}\\ \frac{\ell+\ell'-\ell''+1}{4}}\right)\!
\left(\substack{\frac{\ell+\ell'+\ell''-9}{4}\\ \frac{\ell-3}{2}}\right)\; , & \text{if $\frac{\ell+\ell'+\ell''-1}{2}$
even\; .}
\end{cases}
\end{multline*}
Since there are
\begin{equation*}
\mathfrak{T}\left(\theta,\theta;\tfrac{\ell+1}{2},\tfrac{\ell'-1}{2}, \tfrac{\ell''-1}{2}\right)
\cdot\mathtt{c}\bigl(\tfrac{\ell+1}{2};t-(j'+j'')\bigr)\cdot
\mathtt{c}(\tfrac{\ell'-1}{2};j')
\cdot\mathtt{c}(\tfrac{\ell''-1}{2};j'')
\end{equation*}
pairs~$(A,B)$ of sets $A$ and $B$ with the properties given in~(\ref{eq:4}), we see that the number of these pairs
in the family~(\ref{eq:1}) can be calculated by means of~(\ref{eq:7}).

\noindent{\rm(ii)}
Let us count the number of pairs $(A,B)$ in the family~(\ref{eq:1}) such that
\begin{equation*}
\{1,t\}\cap A=\{1,t\}\ \ \ \text{and}\ \ \ |\{1,t\}\cap B|=0\; .
\end{equation*}
For any such a pair, by~\cite[Lem.~2.6(ii), (iii)]{M-SC-II}, we know that
\begin{equation*}
\varrho(A)=\tfrac{\ell'+1}{2}\ \ \ \text{and}\ \ \ \varrho(B)=\tfrac{\ell''-1}{2}\; ,
\end{equation*}
and the set $E_t-(A\dot\cup B)$ is a disjoint union of $\tfrac{\ell-1}{2}$ intervals. We denote by
\begin{equation*}
\mathfrak{T}\left(\alpha,\alpha;\tfrac{\ell-1}{2}, \tfrac{\ell'+1}{2}, \tfrac{\ell''-1}{2}\right)
\end{equation*}
the number of ternary Smirnov words, of length $(\ell+\ell'+\ell''-1)/2$, that start with
$\alpha$,
end with
$\alpha$, and contain $\tfrac{\ell-1}{2}$ letters~$\theta$, $\tfrac{\ell'+1}{2}$ letters~$\alpha$, and $\tfrac{\ell''-1}{2}$ letters~$\beta$; in the family~(\ref{eq:1}) there are
\begin{equation*}
\mathfrak{T}\left(\alpha,\alpha;\tfrac{\ell-1}{2}, \tfrac{\ell'+1}{2}, \tfrac{\ell''-1}{2}\right)\cdot
\mathtt{c}\bigl(\tfrac{\ell-1}{2};t-(j'+j'')\bigr)\cdot
\mathtt{c}(\tfrac{\ell'+1}{2};j')
\cdot\mathtt{c}(\tfrac{\ell''-1}{2};j'')
\end{equation*}
pairs of sets $A$ and $B$ with the properties given in~(\ref{eq:6}).

By analogy with expression~(\ref{eq:15}), the number $\mathfrak{T}(\alpha,\alpha;n(\theta),n(\alpha),n(\beta))$ of ternary Smirnov words that start with $\alpha$, end with $\alpha$, and contain $n(\theta)$ letters $\theta$, $n(\alpha)$ letters $\alpha$, and $n(\beta)$ letters $\beta$, is
\begin{multline*}
\mathfrak{T}(\alpha,\alpha;n(\theta),n(\alpha),n(\beta))\\=
\begin{cases}
\left(\substack{
n(\alpha)-1
\\
\frac{n(\alpha)+n(\theta)-n(\beta)-1}{2}
}\right)\!
\left(\substack{
\frac{n(\alpha)+n(\theta)+n(\beta)-3}{2}
\\
n(\alpha)-2
}\right)
\; ,\ \ \ \  \text{if $n(\alpha)+n(\theta)+n(\beta)$
odd\; ,}\\
\quad\\
(n(\alpha)+n(\theta)-n(\beta))\cdot
\left(\substack{
n(\alpha)-1
\\
\frac{n(\alpha)+n(\theta)-n(\beta)}{2}
}\right)\!
\left(\substack{
\frac{n(\alpha)+n(\theta)+n(\beta)}{2}-2
\\
n(\alpha)-2
}\right)\; ,\\  \hspace{6.5cm}\text{if $n(\alpha)+n(\theta)+n(\beta)$
even\; .}
\end{cases}
\end{multline*}
As a consequence,
\begin{multline*}
\mathfrak{T}\left(\alpha,\alpha; \tfrac{\ell-1}{2}, \tfrac{\ell'+1}{2}, \tfrac{\ell''-1}{2}\right)\\=
\begin{cases}
\left(\substack{
\frac{\ell'+1}{2}-1
\\
\frac{\frac{\ell'+1}{2}+\frac{\ell-1}{2}-\frac{\ell''-1}{2}-1}{2}
}\right)\!
\left(\substack{
\frac{\frac{\ell'+1}{2}+\frac{\ell-1}{2}+\frac{\ell''-1}{2}-3}{2}
\\
\frac{\ell'+1}{2}-2
}\right)\; ,\ \ \ \  \text{if $\frac{\ell'+1}{2}+\frac{\ell-1}{2}+\frac{\ell''-1}{2}$
odd\; ,}\\
\quad\\
(\frac{\ell'+1}{2}+\frac{\ell-1}{2}-\frac{\ell''-1}{2})\cdot
\left(\substack{
\frac{\ell'+1}{2}-1
\\
\frac{\frac{\ell'+1}{2}+\frac{\ell-1}{2}-\frac{\ell''-1}{2}}{2}
}\right)\!
\left(\substack{
\frac{\frac{\ell'+1}{2}+\frac{\ell-1}{2}+\frac{\ell''-1}{2}}{2}-2
\\
\frac{\ell'+1}{2}-2
}\right)\; ,\\ \hspace{7.3cm}\text{if $\frac{\ell'+1}{2}+\frac{\ell-1}{2}+\frac{\ell''-1}{2}$
even\; ,}
\end{cases}
\end{multline*}
that is,
\begin{multline*}
\mathfrak{T}\left(\alpha,\alpha; \tfrac{\ell-1}{2}, \tfrac{\ell'+1}{2}, \tfrac{\ell''-1}{2}\right)\\=
\begin{cases}
\left(\substack{
\frac{\ell'-1}{2}
\\
\frac{\ell+\ell'-\ell''-1}{4}
}\right)\!
\left(\substack{
\frac{\ell+\ell'+\ell''-7}{4}
\\
\frac{\ell'-3}{2}
}\right)\; , & \text{if $\frac{\ell+\ell'+\ell''-1}{2}$
odd\; ,}\\
\quad\\
\frac{\ell+\ell'-\ell''+1}{2}
\left(\substack{
\frac{\ell'-1}{2}
\\
\frac{\ell+\ell'-\ell''+1}{4}
}\right)\!
\left(\substack{
\frac{\ell+\ell'+\ell''-9}{4}
\\
\frac{\ell'-3}{2}
}\right)\; , & \text{if $\frac{\ell+\ell'+\ell''-1}{2}$
even\; .}
\end{cases}
\end{multline*}
Thus, the number of pairs~$(A,B)$ of sets $A$ and $B$ in the family~(\ref{eq:1}), with the properties given in~(\ref{eq:6}), can be calculated by means of~(\ref{eq:8}).

\noindent{\rm(iii)}
Let us consider the pairs $(A,B)$ in the family~(\ref{eq:1}) such that
\begin{equation*}
|\{1,t\}\cap A|=0\ \ \ \text{and} \ \ \ \{1,t\}\cap B=\{t\}\; .
\end{equation*}
For any such a pair, by~\cite[Lem.~2.6(iii), (i)(b)]{M-SC-II}, we know that
\begin{equation*}
\varrho(A)=\tfrac{\ell'-1}{2}\ \ \ \text{and}\ \ \ \varrho(B)=\tfrac{\ell''+1}{2}\; ,
\end{equation*}
and the set $E_t-(A\dot\cup B)$ is a disjoint union of $\tfrac{\ell+1}{2}$ intervals. We denote by
\begin{equation*}
\mathfrak{T}\left(\theta,\beta;\tfrac{\ell+1}{2},\tfrac{\ell'-1}{2},\tfrac{\ell''+1}{2}\right)
\end{equation*}
the number of ternary Smirnov words, of length $(\ell+\ell'+\ell''+1)/2$, that start with
$\theta$,
end with
$\beta$, and contain $\tfrac{\ell+1}{2}$ letters~$\theta$, $\tfrac{\ell'-1}{2}$ letters~$\alpha$, and $\tfrac{\ell''+1}{2}$ letters~$\beta$; in the family~(\ref{eq:1}) there are
\begin{equation*}
\mathfrak{T}\left(\theta,\beta;\tfrac{\ell+1}{2},\tfrac{\ell'-1}{2},\tfrac{\ell''+1}{2}\right)\cdot
\mathtt{c}\bigl(\tfrac{\ell+1}{2};t-(j'+j'')\bigr)\cdot
\mathtt{c}(\tfrac{\ell'-1}{2};j')
\cdot\mathtt{c}(\tfrac{\ell''+1}{2};j'')
\end{equation*}
pairs of sets $A$ and $B$ with the properties given in~(\ref{eq:5}).

By Remark~\ref{th:2}(ii), we have
\begin{multline*}
\mathfrak{T}\left(\theta,\beta;\tfrac{\ell+1}{2},\tfrac{\ell'-1}{2},\tfrac{\ell''+1}{2}\right)\\=
\begin{cases}
(\frac{\ell+1}{2}+\frac{\ell''+1}{2}-\frac{\ell'-1}{2})\cdot
\left(\substack{
\frac{\ell+1}{2}-1
\\
\frac{\frac{\ell+1}{2}+\frac{\ell''+1}{2}-\frac{\ell'-1}{2}-1}{2}
}\right)\!
\left(\substack{
\frac{\frac{\ell+1}{2}+\frac{\ell'-1}{2}+\frac{\ell''+1}{2}-3}{2}
\\
\frac{\ell+1}{2}-1
}\right)\; ,\\  \hspace{6cm}\text{if $\frac{\ell+1}{2}+\frac{\ell'-1}{2}+\frac{\ell''+1}{2}$
odd\; ,}\\
\quad\\
\left(\substack{
\frac{\ell+1}{2}-1
\\
\frac{\frac{\ell+1}{2}+\frac{\ell''+1}{2}-\frac{\ell'-1}{2}}{2}-1
}\right)\!
\left(\substack{
\frac{\frac{\ell+1}{2}+\frac{\ell'-1}{2}+\frac{\ell''+1}{2}}{2}-1
\\
\frac{\ell+1}{2}-1
}\right)\\
+\,(\frac{\ell+1}{2}+\frac{\ell''+1}{2}-\frac{\ell'-1}{2})\cdot
\left(\substack{
\frac{\ell+1}{2}-1
\\
\frac{\frac{\ell+1}{2}+\frac{\ell''+1}{2}-\frac{\ell'-1}{2}}{2}
}\right)\!
\left(\substack{
\frac{\frac{\ell+1}{2}+\frac{\ell'-1}{2}+\frac{\ell''+1}{2}}{2}-2
\\
\frac{\ell+1}{2}-1
}\right)\; ,\\
\hspace{6cm}\text{if $\frac{\ell+1}{2}+\frac{\ell'-1}{2}+\frac{\ell''+1}{2}$
even\; ,}
\end{cases}
\end{multline*}
that is,
\begin{multline*}
\mathfrak{T}\left(\theta,\beta;\tfrac{\ell+1}{2},\tfrac{\ell'-1}{2},\tfrac{\ell''+1}{2}\right)\\=
\begin{cases}
\frac{\ell+\ell''-\ell'+3}{2}
\left(\substack{
\frac{\ell-1}{2}
\\
\frac{\ell+\ell''-\ell'+1}{4}
}\right)\!
\left(\substack{
\frac{\ell+\ell'+\ell''-5}{4}
\\
\frac{\ell-1}{2}
}\right)\; ,\ \ \ \text{if $\frac{\ell+\ell'+\ell''+1}{2}$
odd\; ,}\\
\quad\\
\left(\substack{
\frac{\ell-1}{2}
\\
\frac{\ell+\ell''-\ell'-1}{4}
}\right)\!
\left(\substack{
\frac{\ell+\ell'+\ell''-3}{4}
\\
\frac{\ell-1}{2}
}\right)
+\frac{\ell+\ell''-\ell'+3}{2}
\left(\substack{
\frac{\ell-1}{2}
\\
\frac{\ell+\ell''-\ell'+3}{4}
}\right)\!
\left(\substack{
\frac{\ell+\ell'+\ell''-7}{4}
\\
\frac{\ell-1}{2}
}\right)\; ,\\
\hspace{6.4cm}\text{if $\frac{\ell+\ell'+\ell''+1}{2}$
even\; .}
\end{cases}
\end{multline*}
We see that the number of pairs~$(A,B)$ of sets $A$ and $B$ in the family~(\ref{eq:1}), with the properties given in~(\ref{eq:5}),
can be calculated by means of~(\ref{eq:9}).

\noindent{\rm(iv)}
Let us consider the pairs $(A,B)$ in the family~(\ref{eq:1}) such that
\begin{equation*}
\{1,t\}\cap A=\{1\}\ \ \ \text{and}\ \ \ |\{1,t\}\cap B|=0\; .
\end{equation*}
For any such a pair, by~\cite[Lem.~2.6(i)(a),(iii)]{M-SC-II}, we have
\begin{equation*}
\varrho(A)=\tfrac{\ell'+1}{2}\ \ \ \text{and}\ \ \ \varrho(B)=\tfrac{\ell''-1}{2}\; ;
\end{equation*}
note also that the set $E_t-(A\dot\cup B)$ is a disjoint union of $\tfrac{\ell+1}{2}$ intervals.

We denote by
\begin{equation*}
\mathfrak{T}\left(\alpha,\theta;\tfrac{\ell+1}{2},\tfrac{\ell'+1}{2}, \tfrac{\ell''-1}{2}\right)
\end{equation*}
the number of ternary Smirnov words, of length $(\ell+\ell'+\ell''+1)/2$, that start with
$\alpha$,
end with
$\theta$, and contain exactly $\tfrac{\ell+1}{2}$ letters~$\theta$, $\tfrac{\ell'+1}{2}$ letters~$\alpha$, and $\tfrac{\ell''-1}{2}$ letters~$\beta$; in the family~(\ref{eq:1}) there are
\begin{equation*}
\mathfrak{T}\left(\alpha,\theta;\tfrac{\ell+1}{2},\tfrac{\ell'+1}{2}, \tfrac{\ell''-1}{2}\right)
\cdot\mathtt{c}\bigl(\tfrac{\ell+1}{2};t-(j'+j'')\bigr)\cdot
\mathtt{c}(\tfrac{\ell'+1}{2};j')
\cdot\mathtt{c}(\tfrac{\ell''-1}{2};j'')
\end{equation*}
pairs of sets $A$ and $B$ with the properties given in~(\ref{eq:2}).

By analogy with expression~(\ref{eq:16}), the number $\mathfrak{T}(\alpha,\theta;n(\theta),n(\alpha),n(\beta))$ of ternary Smirnov words that start with~$\alpha$, end with $\theta$, and contain $n(\theta)$ letters~$\theta$, $n(\alpha)$ letters $\alpha$, and $n(\beta)$ letters~$\beta$, is
\begin{multline*}
\mathfrak{T}(\alpha,\theta;n(\theta),n(\alpha),n(\beta))\\=
\begin{cases}
(n(\alpha)+n(\theta)-n(\beta))\cdot
\left(\substack{
n(\alpha)-1
\\
\frac{n(\alpha)+n(\theta)-n(\beta)-1}{2}
}\right)\!
\left(\substack{
\frac{n(\alpha)+n(\beta)+n(\theta)-3}{2}
\\
n(\alpha)-1
}\right)\; ,\\  \hspace{6cm}\text{if $n(\alpha)+n(\beta)+n(\theta)$
odd\; ,}\\
\quad\\
\left(\substack{
n(\alpha)-1
\\
\frac{n(\alpha)+n(\theta)-n(\beta)}{2}-1
}\right)\!
\left(\substack{
\frac{n(\alpha)+n(\beta)+n(\theta)}{2}-1
\\
n(\alpha)-1
}\right)\\
+\,(n(\alpha)+n(\theta)-n(\beta))\cdot
\left(\substack{
n(\alpha)-1
\\
\frac{n(\alpha)+n(\theta)-n(\beta)}{2}
}\right)\!
\left(\substack{
\frac{n(\alpha)+n(\beta)+n(\theta)}{2}-2
\\
n(\alpha)-1
}\right)\; ,\\
\hspace{6cm}\text{if $n(\alpha)+n(\beta)+n(\theta)$
even\; .}
\end{cases}
\end{multline*}
As a consequence,
\begin{multline*}
\mathfrak{T}\left(\alpha,\theta;\tfrac{\ell+1}{2},\tfrac{\ell'+1}{2}, \tfrac{\ell''-1}{2}\right)\\=
\begin{cases}
(\frac{\ell'+1}{2}+\frac{\ell+1}{2}-\frac{\ell''-1}{2})\cdot
\left(\substack{
\frac{\ell'+1}{2}-1
\\
\frac{\frac{\ell'+1}{2}+\frac{\ell+1}{2}-\frac{\ell''-1}{2}-1}{2}
}\right)\!
\left(\substack{
\frac{\frac{\ell'+1}{2}+\frac{\ell''-1}{2}+\frac{\ell+1}{2}-3}{2}
\\
\frac{\ell'+1}{2}-1
}\right)\; ,\\  \hspace{6cm}\text{if $\frac{\ell'+1}{2}+\frac{\ell''-1}{2}+\frac{\ell+1}{2}$
odd\; ,}\\
\quad\\
\left(\substack{
\frac{\ell'+1}{2}-1
\\
\frac{\frac{\ell'+1}{2}+\frac{\ell+1}{2}-\frac{\ell''-1}{2}}{2}-1
}\right)\!
\left(\substack{
\frac{\frac{\ell'+1}{2}+\frac{\ell''-1}{2}+\frac{\ell+1}{2}}{2}-1
\\
\frac{\ell'+1}{2}-1
}\right)\\
+\,(\frac{\ell'+1}{2}+\frac{\ell+1}{2}-\frac{\ell''-1}{2})\cdot
\left(\substack{
\frac{\ell'+1}{2}-1
\\
\frac{\frac{\ell'+1}{2}+\frac{\ell+1}{2}-\frac{\ell''-1}{2}}{2}
}\right)\!
\left(\substack{
\frac{\frac{\ell'+1}{2}+\frac{\ell''-1}{2}+\frac{\ell+1}{2}}{2}-2
\\
\frac{\ell'+1}{2}-1
}\right)\; ,\\
\hspace{6cm}\text{if $\frac{\ell'+1}{2}+\frac{\ell''-1}{2}+\frac{\ell+1}{2}$
even\; ,}
\end{cases}
\end{multline*}
that is,
\begin{multline*}
\mathfrak{T}\left(\alpha,\theta;\tfrac{\ell+1}{2},\tfrac{\ell'+1}{2}, \tfrac{\ell''-1}{2}\right)\\=
\begin{cases}
\frac{\ell+\ell'-\ell''+3}{2}
\left(\substack{
\frac{\ell'-1}{2}
\\
\frac{\ell+\ell'-\ell''+1}{4}
}\right)\!
\left(\substack{
\frac{\ell+\ell'+\ell''-5}{4}
\\
\frac{\ell'-1}{2}
}\right)\; ,\ \ \ \ \ \ \ \ \text{if $\frac{\ell+\ell'+\ell''+1}{2}$
odd\; ,}\\
\quad\\
\left(\substack{
\frac{\ell'-1}{2}
\\
\frac{\ell+\ell'-\ell''-1}{4}
}\right)\!
\left(\substack{
\frac{\ell+\ell'+\ell''-3}{4}
\\
\frac{\ell'-1}{2}
}\right)
+\frac{\ell+\ell'-\ell''+3}{2}
\left(\substack{
\frac{\ell'-1}{2}
\\
\frac{\ell+\ell'-\ell''+3}{4}
}\right)\!
\left(\substack{
\frac{\ell+\ell'+\ell''-7}{4}
\\
\frac{\ell'-1}{2}
}\right)\; ,
\\ \hspace{7cm}\!\text{if $\frac{\ell+\ell'+\ell''+1}{2}$
even\; .}
\end{cases}
\end{multline*}
We see that the number of pairs~$(A,B)$ of sets~$A$ and~$B$ in the family~(\ref{eq:1}), with the properties given in~(\ref{eq:2}),
can be calculated by means of~(\ref{eq:10}).

\noindent{\rm(v)}
Let us consider the pairs $(A,B)$ in the family~(\ref{eq:1}) such that
\begin{equation*}
\{1,t\}\cap A=\{1\}\ \ \ \text{and}\ \ \ \{1,t\}\cap B=\{t\}\; .
\end{equation*}
For any such a pair, by~\cite[Lem.~2.6(i)(a), (i)(b), (iii)]{M-SC-II}, we know that
\begin{equation*}
\varrho(A)=\tfrac{\ell'+1}{2}\ \ \ \text{and}\ \ \ \varrho(B)=\tfrac{\ell''+1}{2}\; ,
\end{equation*}
and the set $E_t-(A\dot\cup B)$ is a disjoint union of $\tfrac{\ell-1}{2}$ intervals.

We denote by
\begin{equation*}
\mathfrak{T}\left(\alpha,\beta;\tfrac{\ell-1}{2},\tfrac{\ell'+1}{2}, \tfrac{\ell''+1}{2}\right)
\end{equation*}
the number of ternary Smirnov words, of length $(\ell+\ell'+\ell''+1)/2$, that start with
$\alpha$, end with $\beta$, and contain $\tfrac{\ell-1}{2}$ letters~$\theta$, $\tfrac{\ell'+1}{2}$ letters~$\alpha$, and $\tfrac{\ell''+1}{2}$ letters~$\beta$; in the family~(\ref{eq:1}) there are
\begin{equation*}
\mathfrak{T}\left(\alpha,\beta; \tfrac{\ell-1}{2}, \tfrac{\ell'+1}{2}, \tfrac{\ell''+1}{2}\right)\cdot
\mathtt{c}\bigl(\tfrac{\ell-1}{2};t-(j'+j'')\bigr)\cdot
\mathtt{c}(\tfrac{\ell'+1}{2};j')
\cdot\mathtt{c}(\tfrac{\ell''+1}{2};j'')
\end{equation*}
pairs of sets $A$ and $B$ with the properties given in~(\ref{eq:3}).

By analogy with expression~(\ref{eq:16}), the number $\mathfrak{T}(\alpha,\beta;n(\theta),n(\alpha),n(\beta))$ of ternary Smirnov words that start with $\alpha$, end with $\beta$, and contain $n(\theta)$ letters $\theta$, $n(\alpha)$ letters $\alpha$ and $n(\beta)$ letters $\beta$, is
\begin{multline*}
\mathfrak{T}(\alpha,\beta;n(\theta),n(\alpha),n(\beta))\\=
\begin{cases}
(n(\alpha)+n(\beta)-n(\theta))\cdot
\left(\substack{
n(\alpha)-1
\\
\frac{n(\alpha)+n(\beta)-n(\theta)-1}{2}
}\right)\!
\left(\substack{
\frac{n(\alpha)+n(\theta)+n(\beta)-3}{2}
\\
n(\alpha)-1
}\right)\; ,\\  \hspace{6cm}\text{if $n(\alpha)+n(\theta)+n(\beta)$
odd\; ,}\\
\quad\\
\left(\substack{
n(\alpha)-1
\\
\frac{n(\alpha)+n(\beta)-n(\theta)}{2}-1
}\right)\!
\left(\substack{
\frac{n(\alpha)+n(\theta)+n(\beta)}{2}-1
\\
n(\alpha)-1
}\right)\\
+\,(n(\alpha)+n(\beta)-n(\theta))\cdot
\left(\substack{
n(\alpha)-1
\\
\frac{n(\alpha)+n(\beta)-n(\theta)}{2}
}\right)\!
\left(\substack{
\frac{n(\alpha)+n(\theta)+n(\beta)}{2}-2
\\
n(\alpha)-1
}\right)\; ,\\
\hspace{6cm}\text{if $n(\alpha)+n(\theta)+n(\beta)$
even\; .}
\end{cases}
\end{multline*}
As a consequence,
\begin{multline*}
\mathfrak{T}\left(\alpha,\beta; \tfrac{\ell-1}{2}, \tfrac{\ell'+1}{2}, \tfrac{\ell''+1}{2}\right)\\=
\begin{cases}
(\frac{\ell'+1}{2}+\frac{\ell''+1}{2}-\frac{\ell-1}{2})\cdot
\left(\substack{
\frac{\ell'+1}{2}-1
\\
\frac{\frac{\ell'+1}{2}+\frac{\ell''+1}{2}-\frac{\ell-1}{2}-1}{2}
}\right)\!
\left(\substack{
\frac{\frac{\ell'+1}{2}+\frac{\ell-1}{2}+\frac{\ell''+1}{2}-3}{2}
\\
\frac{\ell'+1}{2}-1
}\right)\; ,\\  \hspace{6cm}\text{if $\frac{\ell'+1}{2}+\frac{\ell-1}{2}+\frac{\ell''+1}{2}$
odd\; ,}\\
\quad\\
\left(\substack{
\frac{\ell'+1}{2}-1
\\
\frac{\frac{\ell'+1}{2}+\frac{\ell''+1}{2}-\frac{\ell-1}{2}}{2}-1
}\right)\!
\left(\substack{
\frac{\frac{\ell'+1}{2}+\frac{\ell-1}{2}+\frac{\ell''+1}{2}}{2}-1
\\
\frac{\ell'+1}{2}-1
}\right)\\
+\,(\frac{\ell'+1}{2}+\frac{\ell''+1}{2}-\frac{\ell-1}{2})\cdot
\left(\substack{
\frac{\ell'+1}{2}-1
\\
\frac{\frac{\ell'+1}{2}+\frac{\ell''+1}{2}-\frac{\ell-1}{2}}{2}
}\right)\!
\left(\substack{
\frac{\frac{\ell'+1}{2}+\frac{\ell-1}{2}+\frac{\ell''+1}{2}}{2}-2
\\
\frac{\ell'+1}{2}-1
}\right)\; ,\\
\hspace{6cm}\text{if $\frac{\ell'+1}{2}+\frac{\ell-1}{2}+\frac{\ell''+1}{2}$
even\; ,}
\end{cases}
\end{multline*}
that is,
\begin{multline*}
\mathfrak{T}\left(\alpha,\beta; \tfrac{\ell-1}{2}, \tfrac{\ell'+1}{2}, \tfrac{\ell''+1}{2}\right)\\=
\begin{cases}
\frac{\ell'+\ell''-\ell+3}{2}
\left(\substack{
\frac{\ell'-1}{2}
\\
\frac{\ell'+\ell''-\ell+1}{4}
}\right)\!
\left(\substack{
\frac{\ell+\ell'+\ell''-5}{4}
\\
\frac{\ell'-1}{2}
}\right)\; ,\ \ \ \  \text{if $\frac{\ell+\ell'+\ell''+1}{2}$
odd\; ,}\\
\quad\\
\left(\substack{
\frac{\ell'-1}{2}
\\
\frac{\ell'+\ell''-\ell-1}{4}
}\right)\!
\left(\substack{
\frac{\ell+\ell'+\ell''-3}{4}
\\
\frac{\ell'-1}{2}
}\right)
+\frac{\ell'+\ell''-\ell+3}{2}
\left(\substack{
\frac{\ell'-1}{2}
\\
\frac{\ell'+\ell''-\ell+3}{4}
}\right)\!
\left(\substack{
\frac{\ell+\ell'+\ell''-7}{4}
\\
\frac{\ell'-1}{2}
}\right)\; ,
\\ \hspace{6.4cm}\text{if $\frac{\ell+\ell'+\ell''+1}{2}$
even\; .}
\end{cases}
\end{multline*}
We see that the number of pairs~$(A,B)$ of sets~$A$ and~$B$ in the family~(\ref{eq:1}), with the properties given in~(\ref{eq:3}),
can be calculated by means of~(\ref{eq:11}).

Assertions~(vi), (vii), (viii) and~(ix) are analogues of assertions~(ii), (iii), (iv) and~(v), respectively.
\end{proof}

\section*{Appendix: Enumeration of Smirnov words over three-letter and four-letter alphabets
in the framework of~\cite{Prodinger}}
\label{appendixpage}

The words without consecutive equal letters (\!{\em waves}, {\em normal words}), called the {\em Smirnov words\/} after the work~\cite{SSZ}, are investigated, applied and enumerated, e.g., in~\cite{A,AvMS,Carlitz-72}\cite[Examples~7.45, 8.14 and 8.16]{Do},\cite{DGG,ERR,EW,Farmer},\cite[Examples~III.24 and~IV.10]{FS},\cite{FHP, Gafni,Gessel-Thesis},\cite[\S{}2.4 and Exercise~3.5.1]{GJ},\cite[pp.~164-166]{Honsberger},\cite{KCh,KT,KG,WVLR,Lea,Li-Thesis,Lientz,MacFie},\cite[Examples~2.2.10 and 13.3.5]{PW-Book},\cite[\S{}4.8]{PW},\cite{Prodinger,RSh,RL,SZ,ShW-Advances,ShW-Edizioni,Sundaram,Taylor,Crux}. The importance of these words can easily be explained~\cite[p.~205]{FS}:
\begin{quote}
{\small
Start from a Smirnov word and substitute for any letter~$a_j$ that appears in it an arbitrary nonempty
sequence of letters~$a_j$. When this operation is done at all places of a Smirnov word,
it gives rise to an unconstrained word. Conversely, any word can be associated to a unique
Smirnov word by collapsing into single letters maximal groups of contiguous equal letters.
}
\end{quote}

\subsection*{Ternary Smirnov words}

Let $(\theta,\alpha,\beta)$ be a three-letter alphabet, and let `$\mathrm{u}$',`$\mathrm{v}$' and `$\mathrm{w}$' be formal variables which mark the letters $\theta$, $\alpha$ and $\beta$, respectively. Let $\boldsymbol{\mathtt{e}}_{\centerdot}(\!\cdots\!)$ denote elementary symmetric polynomials.
The ordinary trivariate generating function
of the set of ternary Smirnov words~is
\begin{equation}
\label{eq:17}
\begin{split}
\frac{1}{1-\left(\frac{\mathrm{u}}{1+\mathrm{u}}+\frac{\mathrm{v}}{1+\mathrm{v}}+\frac{\mathrm{w}}{1+\mathrm{w}}\right)}
&=\frac{(1 + \mathrm{u}) (1 + \mathrm{v}) (1 + \mathrm{w})}{1-(\mathrm{u}\mathrm{v}+\mathrm{u}\mathrm{w}+\mathrm{v}\mathrm{w}+2\mathrm{u}\mathrm{v}\mathrm{w})}
\\&=
\frac{(1 + \mathrm{u}) (1 + \mathrm{v}) (1 + \mathrm{w})}{1-\boldsymbol{\mathtt{e}}_2(\mathrm{u},\mathrm{v},\mathrm{w})-2\boldsymbol{\mathtt{e}}_3(\mathrm{u},\mathrm{v},\mathrm{w})}
\; ;
\end{split}
\end{equation}
see~\cite[Example~III.24]{FS} and~\cite[\S{}2.4.16]{GJ} on the general multivariate generating function of the Smirnov words.

In the theoretical framework of~\cite{Prodinger}, let us consider the system of generating functions
\begin{equation*}
\begin{cases}
f_{\theta}(\mathrm{u},\mathrm{v},\mathrm{w}) = \mathrm{u} + \mathrm{u}f_{\alpha}(\mathrm{u},\mathrm{v},\mathrm{w})
+ \mathrm{u}f_{\beta}(\mathrm{u},\mathrm{v},\mathrm{w})\; ,\\
f_{\alpha}(\mathrm{u},\mathrm{v},\mathrm{w}) =\phantom{\mathrm{u} + }\; \mathrm{v}f_{\theta}(\mathrm{u},\mathrm{v},\mathrm{w})
+ \mathrm{v}f_{\beta}(\mathrm{u},\mathrm{v},\mathrm{w})\; ,\\
f_{\beta}(\mathrm{u},\mathrm{v},\mathrm{w}) = \phantom{\mathrm{u} + }\; \mathrm{w}f_{\theta}(\mathrm{u},\mathrm{v},\mathrm{w})
+ \mathrm{w}f_{\alpha}(\mathrm{u},\mathrm{v},\mathrm{w})
\end{cases}
\end{equation*}
(where again the formal variable `$\mathrm{u}$' marks the letters $\theta$, the variable `$\mathrm{v}$'
marks the letters~$\alpha$, and the variable `$\mathrm{w}$' marks the letters $\beta$)
rewritten, for short, as
\begin{equation}
\label{eq:201}
\begin{cases}
f_{\theta} = \mathrm{u} + \mathrm{u}f_{\alpha} + \mathrm{u}f_{\beta}\; ,\\
f_{\alpha} = \phantom{\mathrm{u} + }\; \mathrm{v}f_{\theta} + \mathrm{v}f_{\beta}\; ,\\
f_{\beta} = \phantom{\mathrm{u} + }\; \mathrm{w}f_{\theta} + \mathrm{w}f_{\alpha}\; .
\end{cases}
\end{equation}
For a letter~$\mathfrak{s}\in(\theta,\alpha,\beta)$, the generating function~$f_{\mathfrak{s}}:=f_{\mathfrak{s}}(\mathrm{u},\mathrm{v},\mathrm{w})$ is meant to count the ternary
Smirnov words starting with the letter~$\theta$ and ending with the letter~$\mathfrak{s}$.

The solutions to the system~(\ref{eq:201}) are
\begin{align*}
f_{\theta}=\frac{\mathrm{u}(1-\mathrm{v}\mathrm{w})}{1-(\mathrm{u}\mathrm{v}+\mathrm{u}\mathrm{w}+\mathrm{v}\mathrm{w}
+2\mathrm{u}\mathrm{v}\mathrm{w})}
&=\frac{\mathrm{u}(1-\boldsymbol{\mathtt{e}}_2(\mathrm{v},\mathrm{w}))}{1-\boldsymbol{\mathtt{e}}_2(\mathrm{u},\mathrm{v},\mathrm{w})
-2\boldsymbol{\mathtt{e}}_3(\mathrm{u},\mathrm{v},\mathrm{w})}\; ,\\
f_{\alpha}=\frac{\mathrm{u}\mathrm{v}(1+\mathrm{w})}{1-(\mathrm{u}\mathrm{v}+\mathrm{u}\mathrm{w}+\mathrm{v}\mathrm{w}
+2\mathrm{u}\mathrm{v}\mathrm{w})}
&=\frac{\mathrm{u}\mathrm{v}(1+\mathrm{w})}{1-\boldsymbol{\mathtt{e}}_2(\mathrm{u},\mathrm{v},\mathrm{w})
-2\boldsymbol{\mathtt{e}}_3(\mathrm{u},\mathrm{v},\mathrm{w})}
\\
\intertext{and}
f_{\beta}&
=\frac{\mathrm{u}\mathrm{w}(1+\mathrm{v})}{1-\boldsymbol{\mathtt{e}}_2(\mathrm{u},\mathrm{v},\mathrm{w})
-2\boldsymbol{\mathtt{e}}_3(\mathrm{u},\mathrm{v},\mathrm{w})}
\; ,
\end{align*}
cf.~(\ref{eq:17}).
Thus, we have\footnote{Requests of the form\newline
{\tt series of} {\em rational\_generating\_function}\newline
and\newline
{\tt series of} {\em rational\_generating\_function} {\tt wrt} {\it formal\_variable}\newline
were made to the online \href{http://www.wolframalpha.com/}{\em W\!olfram{$|$}\!Alpha} computational knowledge engine.}
\begin{align}
\label{eq:18}
f_{\theta}&=\sum_{k=1}^{\infty}\left(\frac{\mathrm{v}+\mathrm{w}+2\mathrm{v}\mathrm{w}}{1-\mathrm{v}\mathrm{w}}\right)^{k-1}\mathrm{u}^k
\\
\label{eq:205}
&=
\underbrace{
(1-\boldsymbol{\mathtt{e}}_2(\mathrm{v},\mathrm{w}))\sum_{k=1}^{\infty}\frac{(\boldsymbol{\mathtt{e}}_1(\mathrm{v},\mathrm{w})
+2\boldsymbol{\mathtt{e}}_2(\mathrm{v},\mathrm{w}))^{k-1}}{(1-\boldsymbol{\mathtt{e}}_2(\mathrm{v},\mathrm{w}))^k}\mathrm{u}^k
}_{
\text{cf.~(\ref{eq:207})}
}
\; ,
\\
\label{eq:206}
f_{\alpha}
&=
\underbrace{
\mathrm{v}(1+\mathrm{w})\sum_{k=1}^{\infty}\frac{(\boldsymbol{\mathtt{e}}_1(\mathrm{v},\mathrm{w})
+2\boldsymbol{\mathtt{e}}_2(\mathrm{v},\mathrm{w}))^{k-1}}{(1-\boldsymbol{\mathtt{e}}_2(\mathrm{v},\mathrm{w}))^k}\mathrm{u}^k
}_{
\text{cf.~(\ref{eq:208})}
}
\\
\intertext{and}
\label{eq:20}
f_{\beta}&=
\mathrm{w}(1+\mathrm{v})\sum_{k=1}^{\infty}\frac{(\mathrm{v}+\mathrm{w}+2\mathrm{v}\mathrm{w})^{k-1}}{(1-\mathrm{v}\mathrm{w})^k}\mathrm{u}^k
\\
\nonumber
&=
\mathrm{w}(1+\mathrm{v})\sum_{k=1}^{\infty}\frac{(\boldsymbol{\mathtt{e}}_1(\mathrm{v},\mathrm{w})
+2\boldsymbol{\mathtt{e}}_2(\mathrm{v},\mathrm{w}))^{k-1}}{(1-\boldsymbol{\mathtt{e}}_2(\mathrm{v},\mathrm{w}))^k}\mathrm{u}^k
\; .
\end{align}

\begin{remark}
\label{th:2}
In the framework of\/~\cite{Prodinger}, the numbers $\mathfrak{T}(\theta,\mathfrak{s};k,i,j)$ of distinct ternary Smirnov words that start with the letter~$\theta$, end with a letter~\mbox{$\mathfrak{s}\in\{\theta,
\beta\}$}, and contain $k$ letters $\theta$, $i$ letters~$\alpha$, and $j$
letters~$\beta$, can be read off from the power series representations of the generating functions~$f_{\mathfrak{s}}$, given in~{\rm(\ref{eq:18})} and~{\rm(\ref{eq:20})}, as the coefficients~$[\mathrm{u}^k\mathrm{v}^i\mathrm{w}^j]f_{\mathfrak{s}}$ of $\mathrm{u}^k\mathrm{v}^i\mathrm{w}^j$:
\begin{itemize}
\item[\rm(i)]
\begin{equation}
\label{eq:15}
\mathfrak{T}(\theta,\theta;k,i,j)=\begin{cases}
\left(\substack{
k-1
\\
\frac{k+i-j-1}{2}
}\right)\!
\left(\substack{
\frac{k+i+j-3}{2}
\\
k-2
}\right)\; , & \text{if $k+i+j$
odd\; ,}\\
\quad\\
(k+i-j)\cdot
\left(\substack{
k-1
\\
\frac{k+i-j}{2}
}\right)\!
\left(\substack{
\frac{k+i+j}{2}-2
\\
k-2
}\right)\; , & \text{if $k+i+j$
even\; .}
\end{cases}
\end{equation}

\item[\rm(ii)]
\begin{equation}
\label{eq:16}
\mathfrak{T}(\theta,\beta;k,i,j)
=\begin{cases}
(k+j-i)\cdot
\left(\substack{
k-1
\\
\frac{k+j-i-1}{2}
}\right)\!
\left(\substack{
\frac{k+i+j-3}{2}
\\
k-1
}\right)\; ,\ \  \text{if $k+i+j$
odd\; ,}\\
\quad\\
\left(\substack{
k-1
\\
\frac{k+j-i}{2}-1
}\right)\!
\left(\substack{
\frac{k+i+j}{2}-1
\\
k-1
}\right)
+(k+j-i)\cdot
\left(\substack{
k-1
\\
\frac{k+j-i}{2}
}\right)\!
\left(\substack{
\frac{k+i+j}{2}-2
\\
k-1
}\right)\; ,\\
\hspace{6cm}\text{if $k+i+j$
even\; .}
\end{cases}
\end{equation}
\end{itemize}
\end{remark}

\subsection*{Smirnov words over a four-letter alphabet}

The ordinary quadrivariate generating function of the set of Smirnov words over the alphabet~$(\theta,\alpha,\beta,\gamma)$
with its four letters marked by the formal variables `$\mathrm{u}$', `$\mathrm{v}$', `$\mathrm{w}$' and `$\mathrm{x}$', is
\begin{gather*}
\frac{1}{1-\left(\frac{\mathrm{u}}{1+\mathrm{u}}+\frac{\mathrm{v}}{1+\mathrm{v}}+\frac{\mathrm{w}}{1+\mathrm{w}}+\frac{\mathrm{x}}{1+\mathrm{x}}\right)}
\\=\frac{(1 + \mathrm{u}) (1 + \mathrm{v}) (1 + \mathrm{w}) (1 + \mathrm{x})}
{1-(
\mathrm{u}\mathrm{v}+\mathrm{u}\mathrm{w}+\mathrm{u}\mathrm{x}+\mathrm{v}\mathrm{w}
+\mathrm{v}\mathrm{x}+\mathrm{w}\mathrm{x}
+2(\mathrm{u}\mathrm{v}\mathrm{w}+\mathrm{u}\mathrm{v}\mathrm{x}+\mathrm{u}\mathrm{w}\mathrm{x}+\mathrm{v}\mathrm{w}\mathrm{x})+3
\mathrm{u}\mathrm{v}\mathrm{w}\mathrm{x})}
\\
=
\frac{(1 + \mathrm{u}) (1 + \mathrm{v}) (1 + \mathrm{w}) (1 + \mathrm{x})}
{1-\boldsymbol{\mathtt{e}}_2(\mathrm{u},\mathrm{v},\mathrm{w},\mathrm{x})
-2\boldsymbol{\mathtt{e}}_3(\mathrm{u},\mathrm{v},\mathrm{w},\mathrm{x})-3\boldsymbol{\mathtt{e}}_4(\mathrm{u},\mathrm{v},\mathrm{w},\mathrm{x})}
\; .
\end{gather*}

For letters $\mathfrak{s}\in(\theta,\alpha,\beta,\gamma)$, the generating functions $g_{\mathfrak{s}}:=g_{\mathfrak{s}}(\mathrm{u},\mathrm{v},\mathrm{w},\mathrm{x})$ defined by the system
\begin{equation}
\label{eq:202}
\begin{cases}
g_{\theta} = \mathrm{u}+\mathrm{u}\cdot(g_{\alpha} + g_{\beta}+ g_{\gamma})\; ,\\
g_{\alpha} = \phantom{\mathrm{u}+}\ \mathrm{v}\cdot(g_{\theta} + g_{\beta}+ g_{\gamma})\; ,\\
g_{\beta} = \phantom{\mathrm{u}+}\ \mathrm{w}\cdot(g_{\theta} + g_{\alpha}+ g_{\gamma})\; ,\\
g_{\gamma} = \phantom{\mathrm{u}+}\ \ \! \mathrm{x}\cdot(g_{\theta} + g_{\alpha}+ g_{\beta})\; ,
\end{cases}
\end{equation}
are intended for enumerating the Smirnov words starting with the letter $\theta$ and ending with the letter $\mathfrak{s}$.

The solutions to the system~(\ref{eq:202}) are
\begin{align}
\nonumber
g_{\theta}&=\frac{\mathrm{u}(1-(\mathrm{v}\mathrm{w}+\mathrm{v}\mathrm{x}+\mathrm{w}\mathrm{x}))}{1-(
\mathrm{u}\mathrm{v}+\mathrm{u}\mathrm{w}+\mathrm{u}\mathrm{x}+\mathrm{v}\mathrm{w}
+\mathrm{v}\mathrm{x}+\mathrm{w}\mathrm{x}
+2(\mathrm{u}\mathrm{v}\mathrm{w}+\mathrm{u}\mathrm{v}\mathrm{x}+\mathrm{u}\mathrm{w}\mathrm{x}+\mathrm{v}\mathrm{w}\mathrm{x})+3
\mathrm{u}\mathrm{v}\mathrm{w}\mathrm{x})}
\\
\nonumber
&=\frac{\mathrm{u}(1-\boldsymbol{\mathtt{e}}_2(\mathrm{v},\mathrm{w},\mathrm{x}))}{1-
\boldsymbol{\mathtt{e}}_2(\mathrm{u},\mathrm{v},\mathrm{w},\mathrm{x})
-2\boldsymbol{\mathtt{e}}_3(\mathrm{u},\mathrm{v},\mathrm{w},\mathrm{x})
-3\boldsymbol{\mathtt{e}}_4(\mathrm{u},\mathrm{v},\mathrm{w},\mathrm{x})}
\\
\label{eq:203}
&=(1-(\mathrm{v}\mathrm{w}+\mathrm{v}\mathrm{x}+\mathrm{w}\mathrm{x}))\sum_{k=1}^{\infty}\frac{(\mathrm{v}+\mathrm{w}+\mathrm{x}
+2(\mathrm{v}\mathrm{w}+\mathrm{v}\mathrm{x}+\mathrm{w}\mathrm{x})
+3\mathrm{v}\mathrm{w}\mathrm{x})^{k-1}}
{(1-(\mathrm{v}\mathrm{w}+\mathrm{v}\mathrm{x}+\mathrm{w}\mathrm{x}+2\mathrm{v}\mathrm{w}\mathrm{x}))^k}\mathrm{u}^k
\\
\label{eq:207}
&=
\underbrace{
(1-\boldsymbol{\mathtt{e}}_2(\mathrm{v},\mathrm{w},\mathrm{x}))\sum_{k=1}^{\infty}\frac{(\boldsymbol{\mathtt{e}}_1(\mathrm{v},\mathrm{w},\mathrm{x})
+2\boldsymbol{\mathtt{e}}_2(\mathrm{v},\mathrm{w},\mathrm{x})
+3\boldsymbol{\mathtt{e}}_3(\mathrm{v},\mathrm{w},\mathrm{x}))^{k-1}}
{(1-\boldsymbol{\mathtt{e}}_2(\mathrm{v},\mathrm{w},\mathrm{x})-2\boldsymbol{\mathtt{e}}_3(\mathrm{v},\mathrm{w},\mathrm{x}))^k}\mathrm{u}^k
}_{
\text{cf.~(\ref{eq:205})}
}
\; ,
\\
\nonumber
g_{\alpha}&=\frac{\mathrm{u}\mathrm{v}(1+\mathrm{w})(1+\mathrm{x})}{1-(
\mathrm{u}\mathrm{v}+\mathrm{u}\mathrm{w}+\mathrm{u}\mathrm{x}+\mathrm{v}\mathrm{w}
+\mathrm{v}\mathrm{x}+\mathrm{w}\mathrm{x}
+2(\mathrm{u}\mathrm{v}\mathrm{w}+\mathrm{u}\mathrm{v}\mathrm{x}+\mathrm{u}\mathrm{w}\mathrm{x}+\mathrm{v}\mathrm{w}\mathrm{x})+3
\mathrm{u}\mathrm{v}\mathrm{w}\mathrm{x})}
\\
\nonumber
&=\frac{\mathrm{u}\mathrm{v}(1+\mathrm{w})(1+\mathrm{x})}{1-
\boldsymbol{\mathtt{e}}_2(\mathrm{u},\mathrm{v},\mathrm{w},\mathrm{x})
-2\boldsymbol{\mathtt{e}}_3(\mathrm{u},\mathrm{v},\mathrm{w},\mathrm{x})
-3\boldsymbol{\mathtt{e}}_4(\mathrm{u},\mathrm{v},\mathrm{w},\mathrm{x})}
\\
\label{eq:204}
&=\mathrm{v}(1+\mathrm{w})(1+\mathrm{x})\sum_{k=1}^{\infty}\frac{(\mathrm{v}+\mathrm{w}+\mathrm{x}
+2(\mathrm{v}\mathrm{w}+\mathrm{v}\mathrm{x}+\mathrm{w}\mathrm{x})
+3\mathrm{v}\mathrm{w}\mathrm{x})^{k-1}}
{(1-(\mathrm{v}\mathrm{w}+\mathrm{v}\mathrm{x}+\mathrm{w}\mathrm{x}+2\mathrm{v}\mathrm{w}\mathrm{x}))^k}\mathrm{u}^k
\\
\label{eq:208}
&=
\underbrace{
\mathrm{v}(1+\mathrm{w})(1+\mathrm{x})\sum_{k=1}^{\infty}\frac{(\boldsymbol{\mathtt{e}}_1(\mathrm{v},\mathrm{w},\mathrm{x})
+2\boldsymbol{\mathtt{e}}_2(\mathrm{v},\mathrm{w},\mathrm{x})
+3\boldsymbol{\mathtt{e}}_3(\mathrm{v},\mathrm{w},\mathrm{x}))^{k-1}}
{(1-\boldsymbol{\mathtt{e}}_2(\mathrm{v},\mathrm{w},\mathrm{x})-2\boldsymbol{\mathtt{e}}_3(\mathrm{v},\mathrm{w},\mathrm{x}))^k}\mathrm{u}^k
}_{
\text{cf.~(\ref{eq:206})}
}
\; ,
\\
\nonumber
g_{\beta}&=\frac{\mathrm{u}\mathrm{w}(1+\mathrm{v})(1+\mathrm{x})}{1-
\boldsymbol{\mathtt{e}}_2(\mathrm{u},\mathrm{v},\mathrm{w},\mathrm{x})
-2\boldsymbol{\mathtt{e}}_3(\mathrm{u},\mathrm{v},\mathrm{w},\mathrm{x})
-3\boldsymbol{\mathtt{e}}_4(\mathrm{u},\mathrm{v},\mathrm{w},\mathrm{x})}
\\
\nonumber
&=\mathrm{w}(1+\mathrm{v})(1+\mathrm{x})\sum_{k=1}^{\infty}\frac{(\boldsymbol{\mathtt{e}}_1(\mathrm{v},\mathrm{w},\mathrm{x})
+2\boldsymbol{\mathtt{e}}_2(\mathrm{v},\mathrm{w},\mathrm{x})
+3\boldsymbol{\mathtt{e}}_3(\mathrm{v},\mathrm{w},\mathrm{x}))^{k-1}}
{(1-\boldsymbol{\mathtt{e}}_2(\mathrm{v},\mathrm{w},\mathrm{x})-2\boldsymbol{\mathtt{e}}_3(\mathrm{v},\mathrm{w},\mathrm{x}))^k}\mathrm{u}^k
\intertext{and}
\nonumber
g_{\gamma}&=\frac{\mathrm{u}\mathrm{x}(1+\mathrm{v})(1+\mathrm{w})}{1-
\boldsymbol{\mathtt{e}}_2(\mathrm{u},\mathrm{v},\mathrm{w},\mathrm{x})
-2\boldsymbol{\mathtt{e}}_3(\mathrm{u},\mathrm{v},\mathrm{w},\mathrm{x})
-3\boldsymbol{\mathtt{e}}_4(\mathrm{u},\mathrm{v},\mathrm{w},\mathrm{x})}
\\
\nonumber
&=\mathrm{x}(1+\mathrm{v})(1+\mathrm{w})\sum_{k=1}^{\infty}\frac{(\boldsymbol{\mathtt{e}}_1(\mathrm{v},\mathrm{w},\mathrm{x})
+2\boldsymbol{\mathtt{e}}_2(\mathrm{v},\mathrm{w},\mathrm{x})
+3\boldsymbol{\mathtt{e}}_3(\mathrm{v},\mathrm{w},\mathrm{x}))^{k-1}}
{(1-\boldsymbol{\mathtt{e}}_2(\mathrm{v},\mathrm{w},\mathrm{x})-2\boldsymbol{\mathtt{e}}_3(\mathrm{v},\mathrm{w},\mathrm{x}))^k}\mathrm{u}^k
\; .
\end{align}

\begin{remark}
The numbers $\mathfrak{F}(\theta,\mathfrak{s};k,i,j,h)$ of distinct Smirnov words, over the four-letter alphabet $(\theta,\alpha,\beta,\gamma)$ and with the Parikh vector $(k,i,j,h)$,
that start with the letter~$\theta$ and end with a letter~\mbox{$\mathfrak{s}\in\{\theta,
\alpha\}$}, can be read off, in one way or another, from the power series representations of the generating functions~$g_{\mathfrak{s}}$, given in~{\rm(\ref{eq:203})} and~{\rm(\ref{eq:204})}, as the coefficients~$[\mathrm{u}^k\mathrm{v}^i\mathrm{w}^j\mathrm{x}^h]g_{\mathfrak{s}}$
of~$\mathrm{u}^k\mathrm{v}^i\mathrm{w}^j\mathrm{x}^h$. For example, we have:
\begin{itemize}
\item[\rm(i)]
\begin{multline}
\mathfrak{F}(\theta,\theta;k,i,j,h)=
\\
\sum_{
\substack{
0\;\leq\; p\; \leq\; k-1\; ,\\
0\;\leq\; r\; \leq\; \lfloor\frac{1}{2}(i+j+h-k+1)\rfloor
}
}
\ \
\sum_{
\substack{
p\;\leq\; s\; \leq\; k-1\; ,\\
r\;\leq\; t\; \leq\; \lfloor\frac{1}{2}(i+j+h-k+1)\rfloor
}
}
\tbinom{t+k-1}{k-s-1,\ \ p,\ \ s-p,\ \ r,\ \ t-r}
\\
\times
2^{-i-j-h+3k-2s-2r+4t-3}\cdot 3^{i+j+h-2k+s+r-3t+2}
\\
\times
\Biggm(
\tbinom{k-s-1}{-i-j-h+3k-2s-r+3t-3}
\tbinom{-i-j-h+3k-2s+3t-3}{-i+k+p-s+t-1}
\tbinom{-j-h+2k-2p+2t-2}{-j+k-p+t-1}
\\
-
\tfrac{4}{9}\cdot
\tbinom{k-s-1}{-i-j-h+3k-2s-r+3t-1}
\tbinom{-i-j-h+3k-2s+3t}{-i+k+p-s+t}
\tbinom{-j-h+2k-2p+2t}{-j+k-p+t}
\Biggm)
\; .
\end{multline}

\item[\rm(ii)]
\begin{multline}
\mathfrak{F}(\theta,\alpha;k,i,j,h)=
\\
\sum_{
\substack{
0\;\leq\; p\; \leq\; k-1\; ,\\
0\;\leq\; r\; \leq\; \lfloor\frac{1}{2}(i+j+h-k)\rfloor
}
}
\ \
\sum_{
\substack{
p\;\leq\; s\; \leq\; k-1\; ,\\
r\;\leq\; t\; \leq\; \lfloor\frac{1}{2}(i+j+h-k)\rfloor
}
}
\tbinom{t+k-1}{k-s-1,\ \ p,\ \ s-p,\ \ r,\ \ t-r}
\\
\times
2^{-i-j-h+3k-2s-2r+4t-2}\cdot 3^{i+j+h-2k+s+r-3t+1}
\\
\times
\Biggm(
\tbinom{k-s-1}{-i-j-h+3k-2s-r+3t-2}
\tbinom{-i-j-h+3k-2s+3t-2}{-i+k+p-s+t}
\tbinom{-j-h+2k-2p+2t-2}{-j+k-p+t-1}
\\
+
\tfrac{2}{3}\;\cdot
\Bigm(
\tbinom{k-s-1}{-i-j-h+3k-2s-r+3t-1}
\tbinom{-i-j-h+3k-2s+3t-1}{-i+k+p-s+t}
\\
+\tfrac{2}{3}\!\cdot\!
\tbinom{k-s-1}{-i-j-h+3k-2s-r+3t}
\tbinom{-i-j-h+3k-2s+3t}{-i+k+p-s+t}
\Bigm)\cdot\;\tbinom{-j-h+2k-2p+2t}{-j+k-p+t}
\Biggm)\; .
\end{multline}

\end{itemize}
\end{remark}

\vspace{5mm}
\end{document}